\newcommand{\LCM}{{\rm LCM}\,}
\newcommand{\GCD}{{\rm GCD}\,}
\renewcommand{\mod}{{\rm mod}\,}
\newcommand{\E}{\mathbb{E}}
\newcommand{\R}{\mathbb{R}}
\newcommand{\todistrn}{\overset{w}{\underset{n\to\infty}\longrightarrow}}
\newcommand{\N}{\mathbb{N}}
\renewcommand{\R}{\mathbb{R}}
\newcommand{\mmp}{\mathbb{P}}
\DeclareMathOperator{\1}{\mathbbm{1}}
\newtheorem{thm}{Theorem}[section]
\newtheorem{lemma}[thm]{Lemma}
\newtheorem{cor}[thm]{Corollary}
\newtheorem{example}[thm]{Example}
\newtheorem{assertion}[thm]{Proposition}
\theoremstyle{definition}
\newtheorem{define}[thm]{Definition}
\theoremstyle{remark}
\newtheorem{rem}[thm]{Remark}
\begin{document}

\title[{Multivariate multiplicative functions of random vectors in large integer domains}]{Multivariate multiplicative functions of uniform random vectors in large integer domains}

\author{Zakhar Kabluchko}
\address{Zakhar Kabluchko: Institut f\"ur Mathematische Stochastik, Westf\"alische Wilhelms-Universit\"at M\"unster, M\"unster, Germany}
\email{zakhar.kabluchko@uni-muenster.de}

\author{Oleksandr Marynych}
\address{Oleksandr Marynych: Faculty of Computer Science and Cybernetics, Taras Shev\-chen\-ko National University of Kyiv, Kyiv, Ukraine}
\email{marynych@unicyb.kiev.ua}

\author{Kilian Raschel}
\address{Kilian Raschel: Universit\'{e} d’Angers, CNRS, Laboratoire Angevin de Recherche en Math\'{e}matiques, Angers, France}
\email{raschel@math.cnrs.fr}

\begin{abstract}
For a wide class of sequences of integer domains $\mathcal{D}_n\subset\N^d$, $n\in\N$, we prove distributional limit theorems for $F(X_1^{(n)},\ldots,X_d^{(n)})$, where $F$ is a multivariate multiplicative function and $(X_1^{(n)},\ldots,X_d^{(n)})$ is a random vector with uniform distribution on $\mathcal{D}_n$. As a corollary, we obtain limit theorems for the greatest common divisor and least common multiple of the random set $\{X_1^{(n)},\ldots,X_d^{(n)}\}$. This generalizes previously known limit results for $\mathcal{D}_n$ being either a discrete cube or a discrete hyperbolic region.
\end{abstract}

\keywords{Distribution of arithmetic functions; greatest common divisor; least common multiple; multivariate multiplicative function; regular growth of integer domains; van Hove condition}

\subjclass[2020]{Primary: 11A05, 60F05; secondary: 11N60}

\maketitle

\section{Introduction}
Let $F:\mathbb{N}^d\to \mathbb{C}$ be an arithmetic function of $d\geq1$ integer arguments, with $\mathbb N=\{1,2,3,\ldots\}$. A standard problem in analytic number theory is the estimation of the multivariate sum
$$
\sum_{x_1=1}^{n_1}\cdots\sum_{x_d=1}^{n_d}F(x_1,\ldots,x_d)
$$
for large values of $(n_1,\ldots,n_d)\in\mathbb{N}^d$. A particular instance of this problem consists in establishing existence of the so-called mean value of $F$, which is defined via
\begin{equation}\label{eq:mean_value_gen_def}
M(f):=\lim_{n_1,\ldots,n_d\to\infty}\frac{1}{n_1 \cdots n_d}\sum_{x_1=1}^{n_1}\cdots\sum_{x_d=1}^{n_d}F(x_1,\ldots,x_d).
\end{equation}
In the probabilistic language, \eqref{eq:mean_value_gen_def} may be recast as follows. Let $(U_1^{(n_1)},\ldots,U_d^{(n_d)})$ be a random vector defined on some probability space $(\Omega,\mathcal{F},\mathbb{P})$ and which has the uniform distribution on the finite rectangular set
\begin{equation}
\label{eq:rectangular_domain_def}
   \mathcal{R}_{n_1,\ldots,n_d}:=\left(\varprod_{i=1}^{d}[1,n_i]\right)\bigcap \N^d.
\end{equation}
Then, with $\E$ denoting the expectation with respect to $\mathbb{P}$,
\begin{equation}\label{eq:mean_value_gen_def_probab}
M(F)=\lim_{n_1,\ldots,n_d\to\infty}\E F(U_1^{(n_1)},\ldots,U_d^{(n_d)}).
\end{equation}
A general result on existence of $M(F)$ is due to Ushiroya~\cite{Ushiroya:2012}.

A multivariate arithmetic function $F:\mathbb{N}^d\to\mathbb{C}$ is called multiplicative, see \cite{Toth:2014,Ushiroya:2012,Vaidyanathaswamy:1931}, if
$$
F(1,\ldots,1)=1\quad\text{and}\quad F(m_1n_1,\ldots,m_d n_d)=F(m_1,\ldots,m_d)F(n_1,\ldots,n_d),
$$
for all $(m_1,\ldots,m_d)\in\N^d$ and $(n_1,\ldots,n_d)\in\N^d$ such that
$$
\GCD(m_1 \cdots m_d,n_1 \cdots n_d)=1.
$$
A specialization of Ushiroya's results from \cite{Ushiroya:2012} to a multiplicative function $F$ implies that under a mild summability assumption on $F$, the mean value $M(F)$ exists and is equal to
\begin{equation}
\label{eq:mean_value_multiplicative}
   M(F):=\prod_{p\in\mathcal{P}}\left(1-\frac{1}{p}\right)^d \sum_{i_1=0}^{\infty}\cdots\sum_{i_d=0}^{\infty}\frac{F(p^{i_1},\ldots,p^{i_d})}{p^{i_1+\cdots+i_d}},
\end{equation}
where $\mathcal{P}$ stands for the set of prime numbers.

In the last years, there has been a lot of activity around various generalizations and extensions of the aforementioned results. In a probabilistic direction, one may ask about the asymptotic behavior of {\it distributions} of the random variable $F(U_1^{(n_1)},\ldots,U_d^{(n_d)})$, as $n_1,\ldots,n_d\to\infty$ in \eqref{eq:rectangular_domain_def}. This question has been addressed in~\cite{BosMarRas:2019} for a particular choice of $F$, namely, for
$F(x_1,\ldots,x_d)=G(\LCM(x_1,\ldots,x_d))$, with $G$ being a univariate multiplicative arithmetic function. The univariate case $d=1$ is the classical Erd\H{o}s-Wintner theorem, see \cite{EW:1939}, which provides necessary and sufficient conditions for the distributional convergence of $F(U_1^{(n)})$ as $n\to\infty$. In another, more analytic direction, the rectangular domains $\mathcal{R}_{n_1,\ldots,n_d}$ in \eqref{eq:rectangular_domain_def} are replaced by more sophisticated domains of summation $\mathcal{D}_n\subset\N^d$, which grow to $\N^d$ as $n\to\infty$. In particular, in the recent work~\cite{HeyToth:2022-2}, the case of \textit{spherical} summation over the regions
$$
\mathcal{S}_n:=\{(x_1,\ldots,x_d)\in\N^d:x_1^2+\cdots+x_d^2\leq n\},
$$
has been analyzed, whereas the papers~\cite{IksMarRas:2022,HeyToth:2021,HeyToth:2022-1} were devoted to the study of summation over \textit{hyperbolic} regions
$$
\mathcal{H}_n:=\{(x_1,\ldots,x_d)\in\N^d:x_1 \cdots x_d\leq n\}
$$
and their generalizations. A surprising phenomenon revealed in the cited works is that the mean value $M(F)$ given by~\eqref{eq:mean_value_multiplicative} is universal for rectangular, spherical and hyperbolic domains. More specifically, let $\mathcal{D}_n$ be either $\mathcal{R}_{n,\ldots,n}$, $\mathcal{S}_n$ or $\mathcal{H}_n$. For every $n\in\N$, let $(X_1^{(n)},\ldots,X_d^{(n)})$ be a random vector defined on $(\Omega,\mathcal{F},\mathbb{P})$ and having the uniform distribution on $\mathcal{D}_n$, that is,
$$
\mmp\{(X_1^{(n)},\ldots,X_d^{(n)})=(i_1,\ldots,i_d)\}=\frac{1}{\#\mathcal{D}_n},\quad (i_1,\ldots,i_d)\in\mathcal{D}_n,
$$
where $\#\mathcal{D}_n$ denotes the cardinality of $\mathcal{D}_n$. Then, under the same summability assumption on $F$ as in Ushiroya's  result, we have
\begin{multline}\label{eq:universality}
\lim_{n\to\infty}\E F(X_1^{(n)},\ldots,X_d^{(n)})
=\lim_{n\to\infty}\frac{1}{\#\mathcal{D}_n}\sum_{(x_1,\ldots,x_d)\in\mathcal{D}_n}F(x_1,\ldots,x_d)
\\
=M(F)=\prod_{p\in\mathcal{P}}\left(1-\frac{1}{p}\right)^d \sum_{i_1=0}^{\infty}\cdots\sum_{i_d=0}^{\infty}\frac{F(p^{i_1},\ldots,p^{i_d})}{p^{i_1+\cdots+i_d}}.
\end{multline}

The purpose of the present paper is two-fold. First, we shall provide a probabilistic explanation which lies in the core of~\eqref{eq:universality}, by providing sufficient conditions on $F$ for the distributional convergence of $F(X_1^{(n)},\ldots,X_d^{(n)})$ as $n\to\infty$. Second, we shall do this not only for the three types of regions mentioned before, but for a quite general class of integer domains $\mathcal{D}_n$ satisfying mild assumptions.

The paper is organized as follows. In Section~\ref{sec:results}, we formulate our standing assumptions on $\mathcal{D}_n$ and present our main results, which are distributional limit theorems for $F(X_1^{(n)},\ldots,X_d^{(n)})$. The proofs are collected in Section~\ref{sec:proofs}. In Section~\ref{sec:examples_domains}, we provide various examples of domains $\mathcal{D}_n$ satisfying our standing assumptions. In particular, the aforementioned domains $\mathcal{R}_{n_1,\ldots,n_d}$, $\mathcal{S}_n$ and $\mathcal{H}_n$ are covered. In Section~\ref{sec:set-theoretic-operations} we discuss how to construct new domains satisfying our conditions, using standard set-theoretic operations. Some auxiliary results are collected in Appendix~\ref{sec:appendix}. 

Throughout the paper we use the following standard notation: $\overset{w}{\longrightarrow}$ denotes the convergence in distribution (weak convergence of probability measures); ${\rm Int}(A)$, ${\rm cl}(A)$ and $\partial A$ are the topological interior, closure and boundary of a set $A\subset \R^d$, respectively; $a(n)\sim b(n)$, $n\to\infty$, means that $\lim_{n\to\infty}(a(n)/b(n))=1$.

\section{Main results}\label{sec:results}
\subsection{Preliminaries}
Throughout the paper, we assume that $F$ is a multivariate multiplicative arithmetic function of $d\geq 2$ variables. Every multivariate multiplicative function is completely determined by its values on the powers of primes. More precisely, let $\lambda_p(n)$ denote the power of prime $p\in\mathcal{P}$ in the prime decomposition of $n\in\N$. Then
$$
x_i=\prod_{p\in\mathcal{P}}p^{\lambda_p(x_i)},\quad i=1,\ldots,d,
$$
implies
$$
F(x_1,\ldots,x_d)=\prod_{p\in\mathcal{P}}F(p^{\lambda_p(x_1)},\ldots,p^{\lambda_p(x_d)}).
$$

The crucial observation for everything to follow is the  representation for $M(F)$ in~\eqref{eq:mean_value_multiplicative} via independent geometric random variables. Let $(\mathcal{G}_1(p),\ldots,\mathcal{G}_d(p))_{p\in\mathcal{P}}$ be an array of mutually independent random variables with geometric distributions
$$
\mmp\{\mathcal{G}_k(p)\geq j\}=\frac{1}{p^j},\quad j\in\N_0,\quad p\in\mathcal{P},\quad k=1,\ldots,d,
$$
where $\N_0:=\N\cup\{0\}$. Then
$$
M(F)=\E\left(\prod_{p\in\mathcal{P}}F(p^{\mathcal{G}_1(p)},\ldots,p^{\mathcal{G}_d(p)})\right).
$$
The main result of our paper gives sufficient conditions on $F$ which ensure the convergence in distribution
\begin{equation}
\label{eq:main_convergence_F}
F(X_1^{(n)},\ldots,X_d^{(n)})=\prod_{p\in\mathcal{P}}
F(p^{\lambda_p(X_1^{(n)})},\ldots,p^{\lambda_p(X_d^{(n)})})\todistrn
\prod_{p\in\mathcal{P}}F(p^{\mathcal{G}_1(p)},\ldots,p^{\mathcal{G}_d(p)})=:F_{\infty},
\end{equation}
for a general class of integer domains $\mathcal{D}_n$, which we are now going to introduce. 

Let $(\mathcal{D}_n)_{n\in\N}$ be a sequence of finite, non-empty subsets of $\N^d$. Assume that for every fixed $c\in\mathbb{Z}^d$, where $\mathbb{Z}=\{0,\pm 1,\pm 2,\ldots\}$, the following condition is fulfilled:
\begin{equation}\label{eq:van_Hove}
\lim_{n\to\infty}\frac{\#((\mathcal{D}_n+c)\cap \mathcal{D}_n)}{\#\mathcal{D}_n}=1.
\end{equation}
Note that~\eqref{eq:van_Hove} is equivalent to saying that for all $c\in\mathbb{Z}^d$,
$$
\lim_{n\to\infty} \frac{\delta_n(c)}{\#\mathcal{D}_n}=0,
$$
where, denoting $\Delta$ the symmetric difference of two sets,
\begin{equation}
\label{eq:delta}
   \delta_n(c):=\# (\mathcal{D}_n\Delta (\mathcal{D}_n+c)).
\end{equation}
Condition~\eqref{eq:van_Hove} is known in the literature as the regular growth condition; see Chapter~3 in~\cite{Bulinski+Shashkin}. Several equivalent versions  of~\eqref{eq:van_Hove} can be found in Appendix~\ref{sec:appendix} below.

\subsection{Convergence of prime powers to geometric laws}
Our first main result states that, solely under assumption~\eqref{eq:van_Hove}, the array of random vectors $(\lambda_p(X_1^{(n)}),\ldots,\lambda_p(X_d^{(n)}))_{p\in\mathcal{P}}$ converges in distribution to an array of independent geometric variables, thereby providing the first evidence supporting~\eqref{eq:main_convergence_F}.
\begin{thm}\label{prop:geometric}
Assume that~\eqref{eq:van_Hove} holds. Then
$$
\left(\lambda_p(X_1^{(n)}),\ldots,\lambda_p(X_d^{(n)})\right)_{p\in\mathcal{P}}~\todistrn~\left( \mathcal{G}_1(p),\ldots,\mathcal{G}_d(p)\right)_{p\in\mathcal{P}},
$$
in the space $(\R^d)^{\infty}$ endowed with the product topology.
\end{thm}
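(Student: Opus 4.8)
The plan is to reduce the statement to a purely arithmetic equidistribution estimate for residue classes and to feed this into a standard passage from convergence of finite-dimensional laws to weak convergence in the product topology. Since the product topology on $(\R^d)^{\infty}$ is the coarsest one making all finite coordinate projections continuous, it suffices to prove convergence of every finite-dimensional marginal and then upgrade to weak convergence on the product. The upgrade is routine: each one-dimensional marginal $\lambda_p(X_k^{(n)})$ is $\N_0$-valued and (once the marginal convergence is shown) is tight, so by Tychonoff's theorem the laws are tight on the product, and fidi convergence together with tightness yields weak convergence. Thus I fix finitely many distinct primes $p_1,\dots,p_r$ and show that the joint law of $\left(\lambda_{p_j}(X_k^{(n)})\right)_{1\le k\le d,\,1\le j\le r}$ converges to that of the corresponding independent geometric array.

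Because all these vectors take values in $\N_0^{dr}$, I would establish fidi convergence through the joint upper-tail (survival) probabilities. For fixed non-negative integers $a_{k,j}$, set $m_k:=\prod_{j=1}^{r}p_j^{a_{k,j}}$. As the $p_j$ are distinct primes, the event $\{\lambda_{p_j}(X_k^{(n)})\ge a_{k,j}\text{ for all }j,k\}$ is exactly $\{m_k\mid X_k^{(n)}\text{ for all }k\}$, whose probability is $N_n(0)/\#\mathcal{D}_n$, where $N_n(r)$ counts the points of $\mathcal{D}_n$ lying in the residue class $r$ modulo the sublattice $L:=\prod_{k=1}^{d}m_k\mathbb{Z}$. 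The target value is $\prod_k m_k^{-1}$, which matches $\prod_{k,j}\mmp\{\mathcal{G}_k(p_j)\ge a_{k,j}\}=\prod_{k,j}p_j^{-a_{k,j}}$; since this is the survival function of the independent geometric array, pointwise convergence of survival functions yields pointwise convergence of the probability mass functions by finite inclusion–exclusion over the $2^{dr}$ corners, hence fidi convergence (with independence built in automatically).

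The heart of the argument, and where condition~\eqref{eq:van_Hove} enters, is the claim that the residue classes modulo $L$ become asymptotically equidistributed, i.e. $N_n(r)/\#\mathcal{D}_n\to 1/M$ with $M:=\#(\mathbb{Z}^d/L)=\prod_k m_k$, for every $r$. Given two residues $r,r'$ I would pick a fixed $c\in\mathbb{Z}^d$ representing $r'-r$ modulo $L$; translation by $c$ is a bijection carrying the residue-$r$ points of $\mathcal{D}_n$ onto the residue-$r'$ points of $\mathcal{D}_n+c$, so that $N_n(r)$ equals the number of residue-$r'$ points of $\mathcal{D}_n+c$. Comparing with $N_n(r')$ and discarding the points in the symmetric difference gives the clean bound $|N_n(r)-N_n(r')|\le\delta_n(c)$ with $\delta_n(c)$ as in~\eqref{eq:delta}. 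Only finitely many representatives $c$ occur, so~\eqref{eq:van_Hove} forces $\max_{r,r'}|N_n(r)-N_n(r')|=o(\#\mathcal{D}_n)$; combined with $\sum_r N_n(r)=\#\mathcal{D}_n$ this gives $N_n(r)=\#\mathcal{D}_n/M+o(\#\mathcal{D}_n)$ for each $r$, which is the equidistribution claim.

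I expect the main difficulty to be bookkeeping rather than conceptual: organizing the reduction so that the single estimate $|N_n(r)-N_n(r')|\le\delta_n(c)$ does all the work, and carefully translating convergence of joint survival functions into weak convergence of the full infinite array. The arithmetic input is light—only that divisibility by distinct prime powers combines into a single modulus—while the probabilistic input (tightness on the product, recovery of the mass function from the survival function) is standard; the regular-growth condition~\eqref{eq:van_Hove} is used solely, but crucially, to annihilate the boundary terms $\delta_n(c)$.
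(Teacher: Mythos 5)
Your proposal is correct and follows essentially the same route as the paper: the paper's Lemma~\ref{lem:uniformity} is exactly your equidistribution claim for residue classes, proved by the same symmetric-difference bound $|N_n(r)-N_n(r')|\leq\delta_n(c)$, and the theorem is then deduced by identifying the joint survival probabilities with divisibility counts for the moduli $\mu_k=\prod_t p_t^{j_{k,t}}$. The only difference is that you spell out the routine steps (tightness on the product, inclusion--exclusion from survival functions to mass functions) that the paper leaves implicit.
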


\begin{rem}
In the rectangular case $\mathcal{D}_n=\mathcal{R}_{n_1,\ldots,n_d}$, Theorem~\ref{prop:geometric} is well known in 
probabilistic number theory and has a long history, see, for instance, Eqs.~(2.5)--(2.7) in \cite{Kubilius:1964} and \cite{Bill:74}. Note that in this case, the components $X_1^{(n)},\ldots,X_d^{(n)}$ are independent and $X_j^{(n)}$ has the uniform distribution on $\{1,\ldots,n_j\}$, for every $j=1,\ldots,d$. 
\end{rem}

\subsection{Limit theorems for \texorpdfstring{$F$}{F}}
We start with finding conditions ensuring a.s.~finiteness of $F_{\infty}$ in \eqref{eq:main_convergence_F}. Recall that we assume $d\geq 2$. According to Eq.~(20) in \cite{BosMarRas:2019} (or just by an appeal to the Borel-Cantelli lemma), we have
$$
\sum_{p\in\mathcal{P}}\1_{\{\sum_{k=1}^{d}\mathcal{G}_k(p)\geq 2\}}<\infty\quad \text{a.s.}
$$
Furthermore, because $F$ is multiplicative, $F(1,1,\ldots,1)=1$. Thus, a.s.~finiteness of $F_{\infty}$ is equivalent to the a.s.~convergence of the product
$$
\widehat{F}_{\infty}:=\prod_{p\in\mathcal{P}\;:\;\sum_{k=1}^{d}\mathcal{G}_k(p)=1}F(p^{\mathcal{G}_1(p)},\ldots,p^{\mathcal{G}_d(p)}).
$$
For $i=1,\ldots,d$, put
$$
F_i(x):=\log F(1,\ldots,1,x,1,\ldots,1),
$$
where $x\in\N$ on the right-hand side is on the $i$-th position and $\log$ is the principal branch of the logarithm (a branch which satisfies $\log(1)=0$ and has a branch cut along $(-\infty, 0]$). We assume that for all $i=1,\ldots,d$, there are only finitely many $p\in\mathcal{P}$ such that $F(1,\ldots,1,p,1,\ldots,1)$ falls inside the branch cut. Otherwise, we stipulate that the series diverges. Thus, the a.s.~convergence of $\widehat{F}_{\infty}$, hence of $F_{\infty}$, is equivalent to the a.s.~convergence of the series
\begin{equation}\label{eq:product_to_series}
\sum_{p\in\mathcal{P}}\left(\sum_{i=1}^{d}F_i(p)\1_{\{\mathcal{G}_i(p)=1,\mathcal{G}_j(p)=0\text{ for }j\neq i\}}\right),
\end{equation}
comprised of independent random variables. An application of Kolmogorov's three series theorem immediately yields the following:
\begin{assertion}\label{prop:three_series}
The infinite product $F_{\infty}$ converges a.s.~if and only if the following series converge for every $A>0$:
\begin{equation}\label{eq:three_series}
\sum_{p\in\mathcal{P}}\frac{1}{p}\sum_{i=1}^{d}\1_{\{|F_i(p)|>A\}},\quad\sum_{p\in\mathcal{P}}\frac{1}{p}\sum_{i=1}^{d}F_i(p)\1_{\{|F_i(p)|\leq A\}},\quad
\sum_{p\in\mathcal{P}}\frac{1}{p}\sum_{i=1}^{d}|F_i(p)|^2\1_{\{|F_i(p)|\leq A\}}.
\end{equation}
\end{assertion}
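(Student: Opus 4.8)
The plan is to apply Kolmogorov's three series theorem to the series of independent summands
$$
Y_p:=\sum_{i=1}^{d}F_i(p)\,\1_{\{\mathcal{G}_i(p)=1,\ \mathcal{G}_j(p)=0\text{ for }j\neq i\}},\qquad p\in\mathcal{P},
$$
whose a.s.\ convergence has already been identified in the excerpt with that of $F_\infty$. First I would record the law of $Y_p$. The $d$ events appearing in the indicators are pairwise disjoint, and each of them carries probability
$$
q_p:=\frac1p\Bigl(1-\frac1p\Bigr)^{d},
$$
as follows from $\mmp\{\mathcal{G}_k(p)=0\}=1-1/p$ and $\mmp\{\mathcal{G}_k(p)=1\}=(1/p)(1-1/p)$ together with independence across $k$. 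Hence $Y_p$ equals $F_i(p)$ on the $i$-th event and vanishes off their union. The finitely many primes for which some $F_i(p)$ is undefined (because the value lands on the branch cut) may be dropped without affecting convergence.

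Next I would write out Kolmogorov's three truncated series at an arbitrary level $A>0$. A direct computation from the law of $Y_p$ gives
$$
\mmp\{|Y_p|>A\}=q_p\sum_{i=1}^{d}\1_{\{|F_i(p)|>A\}},\qquad
\E\bigl[Y_p\1_{\{|Y_p|\le A\}}\bigr]=q_p\sum_{i=1}^{d}F_i(p)\1_{\{|F_i(p)|\le A\}},
$$
and, since $F$ is complex valued, the variance series is taken in the complex sense $\Var(Z)=\E|Z-\E Z|^2$, so that
$$
\Var\bigl(Y_p\1_{\{|Y_p|\le A\}}\bigr)=q_p\sum_{i=1}^{d}|F_i(p)|^2\1_{\{|F_i(p)|\le A\}}-\Bigl|\E\bigl[Y_p\1_{\{|Y_p|\le A\}}\bigr]\Bigr|^2.
$$
Thus each of the three series coincides, up to the weight $q_p$ in place of $1/p$ and up to the subtracted square in the variance, with the corresponding series in \eqref{eq:three_series}.

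The crucial step is the passage from the weight $q_p$ to $1/p$. Here I would use the expansion $q_p=\frac1p(1-\frac1p)^d=\frac1p+O(p^{-2})$ together with the two-sided bound $2^{-d}p^{-1}\le q_p\le p^{-1}$, valid for all $p\in\mathcal{P}$ (the ratio $q_p/(1/p)=(1-1/p)^d$ is increasing, hence minimized at $p=2$). For the \emph{first} series and for the main term $q_p\sum_i|F_i(p)|^2\1_{\{|F_i(p)|\le A\}}$ of the variance, all summands are nonnegative, so the bounded ratio makes convergence with weight $q_p$ equivalent to convergence with weight $1/p$. The \emph{second} (mean) series is only conditionally convergent, so this comparison is unavailable; instead I would observe that
$$
\sum_{p\in\mathcal{P}}\Bigl(q_p-\frac1p\Bigr)\sum_{i=1}^{d}F_i(p)\1_{\{|F_i(p)|\le A\}}
$$
is \emph{absolutely} convergent, because $|q_p-1/p|=O(p^{-2})$ while the inner truncated sum is bounded in modulus by $dA$; hence the mean series converges with weight $q_p$ iff it does with weight $1/p$. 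Finally, the subtracted square in the variance is $O(q_p^2)=O(p^{-2})$ uniformly (by Cauchy--Schwarz, $|\E[Y_p\1_{\{|Y_p|\le A\}}]|^2\le d\,q_p^2\sum_i|F_i(p)|^2\1_{\{|F_i(p)|\le A\}}\le d^2A^2q_p^2$), so it is absolutely summable and irrelevant to convergence. Combining these three equivalences with Kolmogorov's theorem yields the asserted characterization for every $A>0$.

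I expect the main obstacle to be exactly the conditionally convergent mean series: the crude comparison $q_p\asymp 1/p$ fails for signed or complex summands, and one must instead exploit the second-order cancellation $q_p-1/p=O(p^{-2})$ rather than the mere order of magnitude of $q_p$. The nonnegative first and third series, and the harmless variance correction, are then routine once the explicit law of $Y_p$ is in hand.
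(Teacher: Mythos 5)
Your proposal is correct and follows the same route as the paper, which simply invokes Kolmogorov's three-series theorem for the series \eqref{eq:product_to_series}; you merely fill in the routine details (the law of $Y_p$, the weight $q_p=\tfrac1p(1-\tfrac1p)^d$, and the comparison of $q_p$ with $1/p$ — including the correct observation that the signed mean series requires the bound $|q_p-1/p|=O(p^{-2})$ rather than mere comparability) that the paper leaves implicit.
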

It is clear that the convergence of the three series~\eqref{eq:three_series} is a necessary condition for~\eqref{eq:main_convergence_F}. Proving~\eqref{eq:main_convergence_F} under~\eqref{eq:three_series} alone seems to be a very difficult task, even for simple regions $\mathcal{D}_n$ as $\mathcal{R}_{n,\ldots,n}$.

In this paper, we restrict our attention to a subclass of multivariate multiplicative functions satisfying~\eqref{eq:three_series}. Namely, we shall assume that, for all $i=1,\ldots,d$,
\begin{equation}\label{eq:two_series}
\sum_{p\in\mathcal{P}}\frac{1}{p}\1_{\{|F_i(p)|>A\}}<\infty\quad\text{and}\quad
\sum_{p\in\mathcal{P}}\frac{1}{p}|F_i(p)|\1_{\{|F_i(p)|\leq A\}}<\infty.
\end{equation}
It is obvious that~\eqref{eq:two_series} implies~\eqref{eq:three_series}. The difference between conditions~\eqref{eq:three_series} and~\eqref{eq:two_series} is that~\eqref{eq:two_series} is necessary and sufficient for the a.s.~{\it absolute} convergence of the series~\eqref{eq:product_to_series}, whereas under~\eqref{eq:three_series} the a.s.~convergence of the series~\eqref{eq:product_to_series} is, in general, only conditional.

In order to prove~\eqref{eq:main_convergence_F} under~\eqref{eq:two_series}, we shall impose a mild additional assumption on $\mathcal{D}_n$. For $i=1,\ldots,d$ and $a\in\N$, put
$$
\mathbb{Z}_i(a):=\{(x_1,\ldots,x_d)\in\mathbb{Z}^d\;:\; x_i\text{ is divisible by } a\}.
$$
As we shall see below in Lemma~\ref{lem:uniformity}, solely under assumption~\eqref{eq:van_Hove}, one has
\begin{equation}\label{eq:pairwise_convergence_van_hove}
\lim_{n\to\infty}\frac{\#(\mathcal{D}_n\cap \mathbb{Z}_i(a)\cap \mathbb{Z}_j(b))}{\#\mathcal{D}_n}=\frac{1}{ab},
\end{equation}
for every fixed $a,b\in\N$ and $i,j=1,\ldots,d$, $i\neq j$. However, we shall need a further assumption that refines the above limit relation, providing a kind of uniformity in~\eqref{eq:pairwise_convergence_van_hove}. Namely, we assume that there exists $K>0$ such that for all $i,j=1,\ldots,d$, $i\neq j$, $a,b\in\N$ and $n\in\N$,
\begin{equation}
\label{eq:additional_assumption_new01}
   \frac{\#(\mathcal{D}_n\cap \mathbb{Z}_i(a)\cap \mathbb{Z}_j(b))}{\#\mathcal{D}_n}\leq\frac{K}{ab}.
\end{equation}

Recall that $(X_1^{(n)},\ldots,X_d^{(n)})$ is a random vector picked uniformly at random from $\mathcal{D}_n$. Below is our main result.
\begin{thm}\label{thm:main_F_positive}
Assume that $F:\N^d\to \mathbb{C}$ is a multiplicative arithmetic function such that conditions~\eqref{eq:two_series} hold. Let $\mathcal{D}_n$, $n\in\N$, be a sequence of subsets of $\N^d$ such that~\eqref{eq:van_Hove} and~\eqref{eq:additional_assumption_new01} hold. Then
$$
F(X_1^{(n)},\ldots,X_d^{(n)})\todistrn \prod_{p\in\mathcal{P}}F(p^{\mathcal{G}_1(p)},\ldots,p^{\mathcal{G}_d(p)}).
$$
\end{thm}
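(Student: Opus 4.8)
The plan is to prove the convergence by truncating the infinite product at the primes $p\le y$ and then applying the standard interchange-of-limits criterion for weak convergence. For $y>0$ set
\[
F^{(y)}(X^{(n)}):=\prod_{p\le y}F(p^{\lambda_p(X_1^{(n)})},\ldots,p^{\lambda_p(X_d^{(n)})}),\qquad
F_\infty^{(y)}:=\prod_{p\le y}F(p^{\mathcal{G}_1(p)},\ldots,p^{\mathcal{G}_d(p)}).
\]
It then suffices to establish three ingredients: (a) for each fixed $y$, $F^{(y)}(X^{(n)})\todistrn F_\infty^{(y)}$ as $n\to\infty$; (b) $F_\infty^{(y)}\to F_\infty$ as $y\to\infty$; and (c) the uniform tail negligibility
\[
\lim_{y\to\infty}\limsup_{n\to\infty}\mmp\bigl(|F(X^{(n)})-F^{(y)}(X^{(n)})|>\varepsilon\bigr)=0,\qquad\varepsilon>0.
\]
Ingredient (a) is immediate from Theorem~\ref{prop:geometric}: since all the variables involved are integer-valued, the finite-dimensional convergence there reduces to convergence of the joint probability mass function of $(\lambda_p(X_i^{(n)}))_{p\le y,\,1\le i\le d}$, whence $F^{(y)}(X^{(n)})\todistrn F_\infty^{(y)}$ for each fixed $y$. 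Ingredient (b) is the a.s.\ convergence of the product $F_\infty$, which holds under \eqref{eq:two_series}: as noted after Proposition~\ref{prop:three_series}, \eqref{eq:two_series} is precisely the condition for the absolute convergence of the series \eqref{eq:product_to_series}, and a.s.\ convergence implies convergence in distribution.

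A preliminary observation feeds the tail estimate. Setting $b=1$ in \eqref{eq:additional_assumption_new01} and using $\mathbb{Z}_j(1)=\mathbb{Z}^d$ together with $d\ge2$ (so that some $j\ne i$ exists), one extracts the single-coordinate bound $\#(\mathcal{D}_n\cap\mathbb{Z}_i(a))/\#\mathcal{D}_n\le K/a$ for all $i$, $a$ and $n$. In probabilistic terms this reads
\[
\mmp(p^{k}\mid X_i^{(n)})\le \frac{K}{p^{k}},\qquad
\mmp(p\mid X_i^{(n)},\ p\mid X_j^{(n)})\le\frac{K}{p^{2}}\quad(i\ne j),
\]
uniformly in $n$, the second bound coming directly from \eqref{eq:additional_assumption_new01} with $a=b=p$.

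Ingredient (c) is the main obstacle. Call a prime $p$ \emph{bad} for $X^{(n)}$ if either $p^2\mid X_i^{(n)}$ for some $i$ or $p$ divides two distinct coordinates. By the bounds above, $\mmp(\exists\,p>y\text{ bad})\le \sum_{p>y}\bigl(dK+\binom{d}{2}K\bigr)/p^{2}\to0$ as $y\to\infty$, uniformly in $n$, since $\sum_p p^{-2}<\infty$. On the complementary event every prime $p>y$ dividing the sample does so in exactly one coordinate and to the first power, so multiplicativity and the branch-cut assumption give
\[
\log\frac{F(X^{(n)})}{F^{(y)}(X^{(n)})}=\sum_{i=1}^{d}\sum_{p>y}F_i(p)\,\1_{A_{p,i}^{(n)}},\qquad
A_{p,i}^{(n)}:=\{\lambda_p(X_i^{(n)})=1,\ \lambda_p(X_j^{(n)})=0\ \forall j\ne i\}.
\]
Splitting each term according to $|F_i(p)|>A$ or $|F_i(p)|\le A$ and using $\mmp(A_{p,i}^{(n)})\le K/p$, the large part is dominated by $\sum_{p>y}\sum_i (K/p)\1_{\{|F_i(p)|>A\}}$, while the small part is controlled, via Markov's inequality, by $\sum_{p>y}\sum_i (K/p)|F_i(p)|\1_{\{|F_i(p)|\le A\}}$; both tails vanish as $y\to\infty$ uniformly in $n$ by the two conditions \eqref{eq:two_series}. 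Hence the logarithm, and with it the ratio $F(X^{(n)})/F^{(y)}(X^{(n)})$, tends to $1$ in probability uniformly in $n$; since $F^{(y)}(X^{(n)})$ is tight by (a) and $\{F_\infty^{(y)}\}_y$ is tight by (b), a Slutsky-type argument converts this into (c).

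The principal difficulty is exactly ingredient (c): the product tail must be controlled \emph{uniformly in $n$} using only the geometric-type bounds \eqref{eq:additional_assumption_new01}, not the exact distributions of the $\lambda_p(X_i^{(n)})$. The two enabling facts are the single-coordinate estimate $K/a$ read off from \eqref{eq:additional_assumption_new01} with $b=1$ and the absolute-convergence conditions \eqref{eq:two_series}, which together tame both the ``linear'' contribution of simple primes and the quadratically rare bad primes. Minor technical care is needed for the branch cut (only finitely many primes are excluded, and for $y$ large they are absorbed into $F^{(y)}$) and for passing from smallness of the logarithm to smallness of $|F-F^{(y)}|$ through tightness of $F^{(y)}(X^{(n)})$.
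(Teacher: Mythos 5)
Your proposal is correct and follows essentially the same route as the paper's proof: truncation at a prime threshold, Theorem~\ref{prop:geometric} for the head, Billingsley's converging-together criterion, and a tail estimate that separates the ``bad'' primes (controlled via~\eqref{eq:additional_assumption_new01} with $a=p^2$, $b=1$ and $a=b=p$) from the logarithmic sum over simple primes, which is split at the level $A$ and handled by a union bound and Markov's inequality using~\eqref{eq:two_series}. The only cosmetic difference is that you phrase the criterion in terms of $|F-F^{(y)}|$ and make the tightness/Slutsky reduction explicit, whereas the paper works directly with the ratio $Y_2(M,n)$.
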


Examples of integer domains satisfying~\eqref{eq:van_Hove} and~\eqref{eq:additional_assumption_new01} will be presented in Section~\ref{sec:examples_domains}.

The following functions $F$
$$
\N^d\ni (x_1,\ldots,x_d)\mapsto \GCD(x_1,\ldots,x_d)\quad\text{and}\quad \N^d\ni (x_1,\ldots,x_d)\mapsto \frac{\LCM(x_1,\ldots,x_d)}{x_1 \cdots x_d}
$$
are multiplicative and satisfy $F_i(x)\equiv 0$ for every $i=1,\ldots,d$. Thus, Theorem~\ref{thm:main_F_positive} is applicable, leading to the following corollaries.

\begin{cor}\label{thm:gcd}
Assume that~\eqref{eq:van_Hove} and~\eqref{eq:additional_assumption_new01} hold. Then
$$
\GCD(X_1^{(n)},\ldots,X_d^{(n)})~\todistrn~\prod_{p\in\mathcal{P}}p^{\min_{k=1,\ldots,d}\mathcal{G}_k(p)}.
$$
The limiting random variable has the following distribution
\begin{equation}\label{eq:gcd_limit}
\mathbb{P}\left\{\prod_{p\in\mathcal{P}}p^{\min_{k=1,\ldots,d}\mathcal{G}_k(p)}=j\right\}=\frac{1}{\zeta(d)}\frac{1}{j^d},\quad j\in\N,
\end{equation}
where $\zeta$ is the Riemann zeta function.
\end{cor}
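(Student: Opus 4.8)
The plan is to split the proof into two independent parts: first, to deduce the distributional convergence directly from Theorem~\ref{thm:main_F_positive}, and second, to identify the law of the limit by an elementary computation based on independence.

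For the first part, I would verify that the function $F(x_1,\ldots,x_d)=\GCD(x_1,\ldots,x_d)$ meets the hypotheses of Theorem~\ref{thm:main_F_positive}. Its multiplicativity is standard (the gcd factors over blocks supported on disjoint sets of primes) and is already recorded in the paper. Since $\GCD(1,\ldots,1,x,1,\ldots,1)=1$ for every $x\in\N$, we have $F_i(x)\equiv 0$, so both series in~\eqref{eq:two_series} vanish identically and condition~\eqref{eq:two_series} holds trivially. As~\eqref{eq:van_Hove} and~\eqref{eq:additional_assumption_new01} are assumed, Theorem~\ref{thm:main_F_positive} applies and yields
$$
\GCD(X_1^{(n)},\ldots,X_d^{(n)})\todistrn\prod_{p\in\mathcal{P}}\GCD(p^{\mathcal{G}_1(p)},\ldots,p^{\mathcal{G}_d(p)}).
$$
Using $\GCD(p^{a_1},\ldots,p^{a_d})=p^{\min(a_1,\ldots,a_d)}$, the right-hand side equals $\prod_{p\in\mathcal{P}}p^{\min_{k}\mathcal{G}_k(p)}$, which is the claimed limit.

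For the second part, set $M_p:=\min_{k=1,\ldots,d}\mathcal{G}_k(p)$ and $Y:=\prod_{p\in\mathcal{P}}p^{M_p}$. The key observation is that $(M_p)_{p\in\mathcal{P}}$ is a family of \emph{independent} random variables, since the array $(\mathcal{G}_k(p))$ is independent across both the index $k$ and the prime $p$. Using independence across $k$ together with the tail formula $\mathbb{P}\{\mathcal{G}_k(p)\geq j\}=p^{-j}$, I would compute
$$
\mathbb{P}\{M_p\geq j\}=\prod_{k=1}^{d}\mathbb{P}\{\mathcal{G}_k(p)\geq j\}=p^{-jd},\qquad j\in\N_0,
$$
whence $\mathbb{P}\{M_p=j\}=p^{-jd}(1-p^{-d})$.

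Finally, I would assemble the law of $Y$. Writing an arbitrary $j\in\N$ via its prime factorization $j=\prod_{p}p^{\lambda_p(j)}$, the event $\{Y=j\}$ coincides with $\bigcap_{p}\{M_p=\lambda_p(j)\}$, so independence of the $M_p$ gives
$$
\mathbb{P}\{Y=j\}=\prod_{p\in\mathcal{P}}p^{-\lambda_p(j)d}(1-p^{-d})=j^{-d}\prod_{p\in\mathcal{P}}(1-p^{-d}).
$$
Recognizing the Euler product $\prod_{p}(1-p^{-d})^{-1}=\zeta(d)$, which converges since $d\geq 2$, yields $\mathbb{P}\{Y=j\}=\zeta(d)^{-1}j^{-d}$, as asserted. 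No step here constitutes a genuine obstacle, this being a corollary of the main theorem; the only point warranting a word of care is that $Y$ is a well-defined $\N$-valued random variable, i.e.\ that only finitely many $M_p$ are positive almost surely. This is already guaranteed by the a.s.\ finiteness of the limit in Theorem~\ref{thm:main_F_positive}, but it may also be checked directly from $\sum_{p}\mathbb{P}\{M_p\geq 1\}=\sum_{p}p^{-d}<\infty$ via the Borel--Cantelli lemma.
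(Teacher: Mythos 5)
Your proof is correct. The first part (deducing the convergence from Theorem~\ref{thm:main_F_positive} by noting that $\GCD$ is multiplicative with $F_i\equiv 0$, so that~\eqref{eq:two_series} holds vacuously, and then using $\GCD(p^{a_1},\ldots,p^{a_d})=p^{\min_k a_k}$) is exactly the paper's argument. Where you diverge is in identifying the law of the limit $Y=\prod_p p^{M_p}$: you compute the point masses directly, using independence of the $M_p:=\min_k\mathcal{G}_k(p)$ across primes, the tail formula $\mmp\{M_p\geq j\}=p^{-jd}$, and unique factorization to get $\mmp\{Y=j\}=j^{-d}\prod_p(1-p^{-d})$. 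The paper instead computes the Mellin transform $\E Y^s=\prod_p(1-p^{-d})(1-p^{s-d})^{-1}=\zeta(d-s)/\zeta(d)$ for $s<d-1$ and reads off the masses from the Dirichlet series expansion $\frac{1}{\zeta(d)}\sum_{j\geq 1}j^{s-d}$ (the paper's upper summation limit $d$ there is a typo for $\infty$). Your route is more elementary and self-contained: it avoids any appeal to the fact that a Dirichlet series determines its coefficients, and it makes the almost-sure finiteness of $Y$ explicit via Borel--Cantelli, which is a point worth recording. The paper's route has the minor advantage of simultaneously producing the moments of the limit, which are of independent interest. Both arguments are complete and correct.
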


\begin{cor}\label{thm:lcm}
Assume that~\eqref{eq:van_Hove} and~\eqref{eq:additional_assumption_new01} hold. Then
$$
\frac{\LCM(X_1^{(n)},\ldots,X_d^{(n)})}{X_1^{(n)}\cdots X_d^{(n)}}~\todistrn~\prod_{p\in\mathcal{P}}p^{\max_{k=1,\ldots,d}\mathcal{G}_k(p)-\sum_{k=1}^{d}\mathcal{G}_k(p)}.
$$
\end{cor}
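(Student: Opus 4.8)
The plan is to recognize Corollary~\ref{thm:lcm} as a direct instance of Theorem~\ref{thm:main_F_positive} applied to the specific function
$$
F(x_1,\ldots,x_d):=\frac{\LCM(x_1,\ldots,x_d)}{x_1\cdots x_d},
$$
so that the work reduces to checking the two hypotheses on $F$ required by that theorem, namely multiplicativity together with condition~\eqref{eq:two_series}. First I would record the prime-by-prime description of $F$. Since the exponent of a prime $p$ in $\LCM(x_1,\ldots,x_d)$ equals $\max_{k=1,\ldots,d}\lambda_p(x_k)$, while its exponent in $x_1\cdots x_d$ equals $\sum_{k=1}^{d}\lambda_p(x_k)$, one obtains the factorization
$$
F(x_1,\ldots,x_d)=\prod_{p\in\mathcal{P}}p^{\max_{k}\lambda_p(x_k)-\sum_{k}\lambda_p(x_k)},\qquad F(p^{i_1},\ldots,p^{i_d})=p^{\max_{k}i_k-\sum_{k}i_k}.
$$

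From this representation multiplicativity is transparent: if $\GCD(m_1\cdots m_d,n_1\cdots n_d)=1$, then for each prime $p$ at least one of the two families $(\lambda_p(m_k))_{k=1}^{d}$ and $(\lambda_p(n_k))_{k=1}^{d}$ vanishes identically, so $\lambda_p(m_kn_k)=\lambda_p(m_k)+\lambda_p(n_k)$ with one summand zero for all $k$ simultaneously; consequently both the $\max$ and the $\sum$ of the combined exponents split additively across the two vectors, yielding $F(m_1n_1,\ldots,m_dn_d)=F(m_1,\ldots,m_d)F(n_1,\ldots,n_d)$. Moreover, setting every argument except the $i$-th equal to $1$ makes the exponent of each prime equal to $\lambda_p(x)-\lambda_p(x)=0$, so $F(1,\ldots,1,x,1,\ldots,1)=1$ and hence $F_i(x)=\log 1=0$ for all $x\in\N$ and all $i$. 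Therefore $|F_i(p)|=0$ for every prime $p$, both series in~\eqref{eq:two_series} vanish term by term, and condition~\eqref{eq:two_series} holds trivially.

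With~\eqref{eq:van_Hove} and~\eqref{eq:additional_assumption_new01} assumed, Theorem~\ref{thm:main_F_positive} applies and gives
$$
\frac{\LCM(X_1^{(n)},\ldots,X_d^{(n)})}{X_1^{(n)}\cdots X_d^{(n)}}=F(X_1^{(n)},\ldots,X_d^{(n)})\todistrn\prod_{p\in\mathcal{P}}F(p^{\mathcal{G}_1(p)},\ldots,p^{\mathcal{G}_d(p)})=\prod_{p\in\mathcal{P}}p^{\max_{k}\mathcal{G}_k(p)-\sum_{k}\mathcal{G}_k(p)},
$$
which is exactly the asserted limit. Since everything reduces to the bookkeeping above, there is essentially no analytic obstacle here; the only point demanding a little care is the verification of multiplicativity, where one must invoke the coprimality hypothesis to see that the coordinatewise $\max$ distributes over the product of the two argument vectors. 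In contrast to the $\GCD$ case of Corollary~\ref{thm:gcd}, I would present the limit simply as the displayed infinite product and not attempt to reduce it to a closed-form mass function.
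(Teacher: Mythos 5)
Your proposal is correct and follows exactly the paper's route: the paper likewise observes that $F(x_1,\ldots,x_d)=\LCM(x_1,\ldots,x_d)/(x_1\cdots x_d)$ is multiplicative with $F_i(x)\equiv 0$ for all $i$, so that~\eqref{eq:two_series} holds trivially, and then invokes Theorem~\ref{thm:main_F_positive}. Your explicit verification of multiplicativity via the prime-exponent factorization is a correct elaboration of what the paper leaves to the reader.
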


\begin{rem}[Bibliographic comments] Below is a comparison of our results with the existing ones.

\noindent
{\sc Case $\mathcal{D}_n=\mathcal{R}_{n,\ldots,n}$.} In this case Corollaries~\ref{thm:gcd} and~\ref{thm:lcm} are known, with Corollary~\ref{thm:gcd} having a long history. The fact that two independent random integers picked uniformly at random from $\{1,\ldots,n\}$ are asymptotically co-prime with probability $1/\zeta(2)=6/\pi^2$, that is
$$
\lim_{n\to\infty}\mathbb{P}\{\GCD(X_1^{(n)},X_2^{(n)})=1\}=\frac{6}{\pi^2}
$$
goes back to Dirichlet \cite{Dirichlet}, and generalizations of this relation to $d>2$ integers are due to Ces\`{a}ro \cite{Cesaro1,Cesaro2}. To the best of our knowledge, Corollary~\ref{thm:gcd} is due to Christopher~\cite{Christopher:1956}, see also~\cite{Cohen:1960}. Formula~\eqref{eq:gcd_limit} follows from the following chain of equalities. For $s<d-1$, by Euler’s product formula 
$$
\E\left(\prod_{p\in\mathcal{P}}p^{\min_{k=1,\ldots,d}\mathcal{G}_k(p)}\right)^s=\prod_{p\in\mathcal{P}}\E p^{s\min_{k=1,\ldots,d}\mathcal{G}_k(p)}=\prod_{p\in\mathcal{P}}\left(1-\frac{1}{p^d}\right)\frac{1}{1-p^{s-d}}=\frac{\zeta(d-s)}{\zeta(d)}=\frac{1}{\zeta(d)}\sum_{j=1}^{d}\frac{j^{s}}{j^d}.
$$
Corollary~\ref{thm:lcm} can be extracted from Theorem~2.1 in \cite{HilberdinkToth:2016} and is given explicitly in Remark~2.4 in \cite{BosMarRas:2019}. Further pointers to literature related to Corollaries~\ref{thm:gcd} and~\ref{thm:lcm} in case $\mathcal{D}_n=\mathcal{R}_{n,\ldots,n}$ can be found in the introduction~\cite{BosMarRas:2019} and in the survey~\cite{Fernandez+Fernandez:2021}. In~\cite{BosMarRas:2019} a version of Theorem~\ref{thm:main_F_positive} was proved assuming that $F(x_1,\ldots,x_d)=G(\LCM(x_1,\ldots,x_d))$ for some univariate multiplicative function $G:\N\to\mathbb{C}$. Asymptotics of moments accompanying the aforementioned distributional convergences have been derived in~\cite{HilberdinkToth:2016,Toth:2014,Ushiroya:2012}. 

\noindent
{\sc Case $\mathcal{D}_n=\mathcal{H}_n$ (and more general hyperbolic regions, see Example~\ref{ex:further_hyperbolic} below).} In  this case, Corollaries~\ref{thm:gcd} and~\ref{thm:lcm} can be found in Theorems~3.5 and 3.7 in~\cite{IksMarRas:2022}. The  corresponding asymptotics of moments has been derived in~\cite{HeyToth:2021,HeyToth:2022-1}.

\noindent
{\sc Case $\mathcal{D}_n=\mathcal{S}_n$.} The distributional convergence is completely new. The asymptotics of moments has been analyzed in~\cite{HeyToth:2022-2}.
\end{rem}

\section{Proof of the main results}\label{sec:proofs}
\subsection{Proof of Theorem~\ref{prop:geometric}}
We first need an auxiliary lemma.

\begin{lemma}\label{lem:uniformity}
Fix $m_1,\ldots,m_d\in\N$ and $j_k\in\{0,\ldots,m_k-1\}$, $k=1,\ldots,d$. Put
$$
\mathcal{D}_{n}^{(j_1,m_1,\ldots,j_d,m_d)}:=\{(i_1,\ldots,i_d)\in\mathcal{D}_n: i_k\equiv j_k\ (\mod m_k)\text{ for all }k=1,\ldots,d\}.
$$
If~\eqref{eq:van_Hove} holds, then
\begin{equation}\label{eq:vanHove_implies uniformity}
\lim_{n\to\infty}\frac{\#\mathcal{D}_{n}^{(j_1,m_1,\ldots,j_d,m_d)}}{\#\mathcal{D}_n}=\frac{1}{m_1\cdots m_d}.
\end{equation}
\end{lemma}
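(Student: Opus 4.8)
The plan is to read condition~\eqref{eq:van_Hove} as an approximate translation invariance of the counting measure on $\mathcal{D}_n$, and to compare all residue classes modulo $(m_1,\ldots,m_d)$ to one another via integer translations.

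Set $M:=m_1\cdots m_d$, and for a residue vector $r=(r_1,\ldots,r_d)$ with $r_k\in\{0,\ldots,m_k-1\}$ write
$$
N_n(r):=\#\{(i_1,\ldots,i_d)\in\mathcal{D}_n:i_k\equiv r_k\ (\mod m_k)\text{ for all }k=1,\ldots,d\},
$$
so that $\mathcal{D}_n^{(j_1,m_1,\ldots,j_d,m_d)}$ is the class indexed by $r=(j_1,\ldots,j_d)$ and $\sum_r N_n(r)=\#\mathcal{D}_n$, the sum ranging over all $M$ residue vectors. First I would show that any two classes are asymptotically equinumerous. Given $r$ and $r'$, put $c:=r'-r\in\mathbb{Z}^d$; the translation $x\mapsto x+c$ is a bijection from $\{x\in\mathcal{D}_n:x\equiv r\}$ onto $\{y\in\mathcal{D}_n+c:y\equiv r'\}$, whence $N_n(r)=\#\{y\in\mathcal{D}_n+c:y\equiv r'\}$. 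Since the two sets $\{y\in\mathcal{D}_n+c:y\equiv r'\}$ and $\{y\in\mathcal{D}_n:y\equiv r'\}$ can differ only through elements lying in $(\mathcal{D}_n+c)\Delta\mathcal{D}_n$, this yields $|N_n(r)-N_n(r')|\leq\delta_n(c)$, with $\delta_n$ as in~\eqref{eq:delta}.

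To finish, I would insert this bound into the averaging identity
$$
N_n(r)=\frac{1}{M}\sum_{r'}N_n(r')+\frac{1}{M}\sum_{r'}\bigl(N_n(r)-N_n(r')\bigr)=\frac{\#\mathcal{D}_n}{M}+\frac{1}{M}\sum_{r'}\bigl(N_n(r)-N_n(r')\bigr).
$$
Dividing by $\#\mathcal{D}_n$ and invoking~\eqref{eq:van_Hove} in the equivalent form $\delta_n(c)/\#\mathcal{D}_n\to0$ for each of the finitely many translation vectors $c=r'-r$ that occur, the error term tends to $0$ and hence $N_n(r)/\#\mathcal{D}_n\to1/M$, which is exactly~\eqref{eq:vanHove_implies uniformity}.

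I do not expect a genuine obstacle here; the only step needing care is the passage from the translated class count back to the untranslated one, where the discrepancy must be bounded by the symmetric-difference quantity $\delta_n(c)$ and not merely by the intersection appearing in~\eqref{eq:van_Hove}. Because the set of residue vectors is finite, only finitely many $c$ arise, so the version of~\eqref{eq:van_Hove} valid for each fixed $c$ is amply sufficient and no uniformity in $c$ is required.
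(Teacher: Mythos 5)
Your argument is correct and follows essentially the same route as the paper: both proofs bound the discrepancy between two residue classes by the symmetric-difference count $\delta_n(c)$ for a fixed integer shift $c$, and then average over the $m_1\cdots m_d$ classes whose counts sum to $\#\mathcal{D}_n$. The only cosmetic difference is that you compare classes pairwise while the paper anchors every class to the zero residue class; the substance is identical.
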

\begin{proof}
Note that
\begin{equation}\label{eq:d_n_decomposition1}
\mathcal{D}_n=\bigcup_{j_1=0}^{m_1-1}\cdots\bigcup_{j_d=0}^{m_d-1}\mathcal{D}_{n}^{(j_1,m_1,\ldots,j_d,m_d)},
\end{equation}
and the sets on the right-hand side are pairwise disjoint. Furthermore,
\begin{multline*}
\mathcal{D}_{n}^{(j_1,m_1,\ldots,j_d,m_d)}=\mathcal{D}_n\cap (j_1+m_1\mathbb{Z},\ldots,j_d+m_d\mathbb{Z})=(j_1,\ldots,j_d)+(\mathcal{D}_n-(j_1,\ldots,j_d))\cap (m_1\mathbb{Z},\ldots,m_d\mathbb{Z}).
\end{multline*}
Thus,
\begin{align*}
&\hspace{-1cm}\left|\#\mathcal{D}_{n}^{(0,m_1,\ldots,0,m_d)}-\#\mathcal{D}_{n}^{(j_1,m_1,\ldots,j_d,m_d)}\right|\\
&=\left|\#(\mathcal{D}_{n}\cap (m_1\mathbb{Z},\ldots,m_d\mathbb{Z}))-\#((\mathcal{D}_{n}-(j_1,\ldots,j_d))\cap (m_1\mathbb{Z},\ldots,m_d\mathbb{Z}))\right|\\
&\leq \# \left((\mathcal{D}_{n}\cap (m_1\mathbb{Z},\ldots,m_d\mathbb{Z}))\Delta ((\mathcal{D}_{n}-(j_1,\ldots,j_d))\cap (m_1\mathbb{Z},\ldots,m_d\mathbb{Z}))\right)\\
&\leq \# \left(\mathcal{D}_{n}\Delta (\mathcal{D}_{n}-(j_1,\ldots,j_d))\right),
\end{align*}
and we have proved that (with $\delta_n$ introduced in \eqref{eq:delta})
\begin{equation}\label{eq:d_n_minus_d_j}
\left|\#\mathcal{D}_{n}^{(0,m_1,\ldots,0,m_d)}-\#\mathcal{D}_{n}^{(j_1,m_1,\ldots,j_d,m_d)}\right|\leq \delta_n(-(j_1,\ldots,j_d)).
\end{equation}
Plugging this into~\eqref{eq:d_n_decomposition1} yields
$$
\left|\#\mathcal{D}_n-m_1\cdots m_d \#\mathcal{D}_{n}^{(0,m_1,\ldots,0,m_d)}\right|\leq \sum_{j_1=0}^{m_1-1}\cdots\sum_{j_d=0}^{m_d-1}\delta_n(-(j_1,\ldots,j_d)).
$$
Dividing both sides by $\#\mathcal{D}_n$ and sending $n\to\infty$ implies~\eqref{eq:vanHove_implies uniformity} for $j_1=\cdots=j_d=0$. Using the estimate~\eqref{eq:d_n_minus_d_j}, we obtain~\eqref{eq:vanHove_implies uniformity} for arbitrary $j_1,\ldots,j_d$.
\end{proof}

\begin{proof}[Proof of Theorem~\ref{prop:geometric}]
Fix pairwise distinct prime numbers $p_1,\ldots,p_m\in\mathcal{P}$, nonnegative integers $j_{k,t}$, $k=1,\ldots,d$, $t=1,\ldots,m$, and write
\begin{align*}
&\hspace{-0.3cm}\mmp\{\lambda_{p_t}(X_k^{(n)})\geq j_{k,t}\text{ for all }k=1,\ldots,d\text{ and }t=1,\ldots,m\}\\
&=\mmp\{X_k^{(n)}\text{ is divisible by }p_t^{j_{k,t}}\text{ for all }k=1,\ldots,d\text{ and }t=1,\ldots,m\}\\
&=\mmp\{X_k^{(n)}\text{ is divisible by }\prod_{t=1}^{m}p_t^{j_{k,t}}=:\mu_k\text{ for all }k=1,\ldots,d\}\\
&=\frac{1}{\#\mathcal{D}_n}\sum_{i_1=1}^{\infty}\cdots\sum_{i_d=1}^{\infty}\1\left\{(i_1,\ldots,i_d)\in\mathcal{D}_n:i_k\equiv 0\ (\mod \mu_k),k=1,\ldots,d \right\}.
\end{align*}
By Lemma~\ref{lem:uniformity} applied with $m_k=\mu_k$ and $j_k=0$, $k=1,\ldots,d$, we see that the right-hand side converges to
$(\mu_1\cdots \mu_d)^{-1}$ as $n\to\infty$. It remains to note that
$$
\frac{1}{\mu_1\cdots \mu_d}=\prod_{k=1}^{d}\prod_{t=1}^{m}\frac{1}{p_t^{j_{k,t}}}=\mmp\{\mathcal{G}_k(p_t)\geq j_{k,t}\text{ for all }k=1,\ldots,d\text{ and }t=1,\ldots,m\}.
$$
The proof of Theorem~\ref{prop:geometric} is complete.
\end{proof}

\subsection{Proof of Theorem~\ref{thm:main_F_positive}}

Fix a large positive constant $M$ and note that
\begin{multline*}
F(X_1^{(n)},\ldots,X_d^{(n)})=\prod_{p\in\mathcal{P}}F(p^{\lambda_p(X_1^{(n)})},\ldots,p^{\lambda_p(X_d^{(n)})})\\
=\left(\prod_{p\in\mathcal{P},p\leq M}F(p^{\lambda_p(X_1^{(n)})},\ldots,p^{\lambda_p(X_d^{(n)})})\right)\left(\prod_{p\in\mathcal{P},p>M}F(p^{\lambda_p(X_1^{(n)})},\ldots,p^{\lambda_p(X_d^{(n)})})\right)=:Y_1(M,n)Y_2(M,n).
\end{multline*}
By Theorem~\ref{prop:geometric}, one has
$$
Y_1(M,n)~\todistrn~\prod_{p\in\mathcal{P},p\leq M}F(p^{\mathcal{G}_1(p)},\ldots,p^{\mathcal{G}_d(p)}).
$$
Furthermore, the right-hand side of the latter converges a.s.~to
$F_{\infty}$ as $M\to\infty$, which is a.s.~finite. According to Theorem 3.2 in~\cite{Billingsley:1968}, it remains to check that for every fixed $\varepsilon>0$,
\begin{equation}\label{eq:bill_gcd}
\lim_{M\to\infty}\limsup_{n\to\infty}\mmp\left\{\left| Y_2(M,n)-1\right|\geq \varepsilon \right\}=0.
\end{equation}
Note that
\begin{multline}\label{eq:bill_proof1}
\mmp\left\{\left| Y_2(M,n)-1\right|\geq \varepsilon \right\}\leq \mmp\left\{\text{for all }p\in\mathcal{P},p>M, \sum_{i=1}^{d}\lambda_p(X_i^{(n)})\leq 1,\left|Y_2(M,n)-1\right|\geq \varepsilon \right\}\\
+\mmp\left\{\text{for some }p\in\mathcal{P},p>M, \sum_{i=1}^{d}\lambda_p(X_i^{(n)})\geq 2\right\}.
\end{multline}
The second term in \eqref{eq:bill_proof1} can be estimated as follows:
\begin{align*}
&\hspace{-0.2cm}\mmp\{\text{for some }p\in\mathcal{P},p>M, \sum_{i=1}^{d}\lambda_p(X_i^{(n)})\geq 2\}\\
&\leq \mmp\{\text{there exist }p\in\mathcal{P},p>M\text{ and }i=1,\ldots,d\text{ such that } \lambda_p(X_i^{(n)})\geq 2\}\\
&\hspace{4mm}+\mmp\{\text{there exist }p\in\mathcal{P},p>M\text{ and }i,j=1,\ldots,d, i\neq j\text{ such that } \lambda_p(X_i^{(n)})\geq 1,\lambda_p(X_j^{(n)})\geq 1\}\\
&= \mmp\{\text{there exist }p\in\mathcal{P},p>M\text{ and }i=1,\ldots,d\text{ such that } p^2 \text{ divides } X_i^{(n)}\}\\
&\hspace{4mm}+\mmp\{\text{there exist }p\in\mathcal{P},p>M\text{ and }i,j=1,\ldots,d, i\neq j\text{ such that } p \text{ divides } X_i^{(n)}\text{ and }X_j^{(n)}\}\\
&\leq \sum_{i=1}^{d}\sum_{p\in\mathcal{P},p>M}\mmp\{p^2 \text{ divides } X_i^{(n)}\}+\sum_{i,j=1,i\neq j}^{d}\sum_{p\in\mathcal{P},p>M}\mmp\{p \text{ divides } X_i^{(n)}\text{ and }X_j^{(n)}\}\\
&=\sum_{i=1}^{d}\sum_{p\in\mathcal{P},p>M}\frac{\#(\mathcal{D}_n\cap \mathbb{Z}_{i}(p^2))}{\#\mathcal{D}_n}+\sum_{i,j=1,i\neq j}^{d}\sum_{p\in\mathcal{P},p>M}\frac{\#(\mathcal{D}_n\cap \mathbb{Z}_{i}(p)\cap \mathbb{Z}_j(p))}{\#\mathcal{D}_n}.
\end{align*}
The double limit ($n\to\infty$, $M\to\infty$) of the first term is equal to zero by an appeal to~\eqref{eq:additional_assumption_new01} with $a=p^2$ and $b=1$, since
$$
\lim_{M\to\infty}\sum_{p\in\mathcal{P},p>M}\frac{1}{p^2}=0.
$$
Similarly, the double limit of the second term is equal to zero by an appeal to~\eqref{eq:additional_assumption_new01} with $a=b=p$.

In order to deal with the first summand in~\eqref{eq:bill_proof1}, we first observe that on the event
$$
\left\{\text{for all }p\in\mathcal{P},p>M,\sum_{i=1}^{d}\lambda_p(X_i^{(n)})\leq 1\right\},
$$
we may pass to the logarithm of $Y_2(M,n)$. Thus, it suffices to prove that, for every $\varepsilon>0$,
$$
\lim_{M\to\infty}\limsup_{n\to\infty}\mmp\left\{\text{for all }p\in\mathcal{P},p>M, \sum_{i=1}^{d}\lambda_p(X_i^{(n)})\leq 1,\left|\sum_{p\in\mathcal{P},p>M}\log F(p^{\lambda_p(X_1^{(n)})},\ldots,p^{\lambda_p(X_d^{(n)})})\right|\geq \varepsilon \right\}=0.
$$
Introduce, for $n\in\N$, $ i=1,\ldots,d$ and $p\in\mathcal{P}$, the events
$$
C_{n,i,p}:=\{\lambda_p(X_i^{(n)})=1,\lambda_p(X_j^{(n)})=0,j\neq i\},
$$
and note that $C_{n,i,p}\cap C_{n,j,p}=\varnothing$ as soon as $i\neq j$. On the event $C_{n,i,p}$, we have 
$$
\log F(p^{\lambda_p(X_1^{(n)})},\ldots,p^{\lambda_p(X_d^{(n)})})=F_i(p)
$$ 
and, therefore, it suffices to show that, for every fixed $\varepsilon>0$,
\begin{equation}\label{eq:main_thm_F_proof1}
\lim_{M\to\infty}\limsup_{n\to\infty}\mmp\left\{\left|\sum_{p\in\mathcal{P},p>M}\sum_{i=1}^{d}F_i(p)\1_{C_{n,i,p}}\right|\geq \varepsilon \right\}=0.
\end{equation}
Fix some $A>0$ and note that, for every $\varepsilon>0$,
\begin{align*}
&\hspace{-1cm}\mmp\left\{\left|\sum_{p\in\mathcal{P},p>M}\sum_{i=1}^{d}F_i(p)\1_{\{|F_i(p)|>A, C_{n,i,p}\}}\right|\geq \varepsilon \right\}\\
&\leq
\mmp\{\text{for some }p\in\mathcal{P}\text{ and }i=1,\ldots,d,\ |F_i(p)|>A\text{ and }C_{n,i,p}\text{ holds}\}\\
&\leq\sum_{p\in\mathcal{P},p>M}\sum_{i=1}^{d}\1_{\{|F_i(p)|>A\}}\mmp\{C_{n,i,p}\}\leq \sum_{p\in\mathcal{P},p>M}\sum_{i=1}^{d}\1_{\{|F_i(p)|>A\}}\mmp\{\lambda_p(X_i^{(n)})\geq 1\}\\
&=\sum_{p\in\mathcal{P},p>M}\sum_{i=1}^{d}\1_{\{|F_i(p)|>A\}}\frac{\#(\mathcal{D}_n\cap \mathbb{Z}_{i}(p))}{\#\mathcal{D}_n}\leq K\sum_{p\in\mathcal{P},p>M}\frac{1}{p}\sum_{i=1}^{d}\1_{\{|F_i(p)|>A\}},
\end{align*}
where we used~\eqref{eq:additional_assumption_new01} with $a=p$ and $b=1$ for the last passage. The right-hand side converges to zero as $M\to\infty$, in view of the first relation in~\eqref{eq:three_series}. So, in order to prove~\eqref{eq:main_thm_F_proof1}, we need to check that
\begin{equation}\label{eq:main_thm_F_proof2}
\lim_{M\to\infty}\limsup_{n\to\infty}\mmp\left\{\left|\sum_{p\in\mathcal{P},p>M}\sum_{i=1}^{d}F_i(p)\1_{\{|F_i(p)|\leq A,C_{n,i,p}\}}\right|\geq \varepsilon \right\}=0.
\end{equation}

This is accomplished by an appeal to Markov's inequality as follows:
\begin{align*}	
\mmp\left\{\left|\sum_{p\in\mathcal{P},p>M}\sum_{i=1}^{d}F_i(p)\1_{\{|F_i(p)|\leq A,C_{n,i,p}\}}\right|\geq \varepsilon \right\}&\leq \frac{1}{\varepsilon}\sum_{p\in\mathcal{P},p>M}\sum_{i=1}^{d}|F_i(p)|\1_{\{|F_i(p)|\leq A\}}\mmp\{C_{n,i,p}\}\\
&\leq \frac{1}{\varepsilon}\sum_{p\in\mathcal{P},p>M}\sum_{i=1}^{d}|F_i(p)|\1_{\{|F_i(p)|\leq A\}}\mmp\{\lambda_p(X_i^{(n)})\geq 1\}\\
&= \frac{1}{\varepsilon}\sum_{p\in\mathcal{P},p>M}\sum_{i=1}^{d}F_i(p)\1_{\{|F_i(p)|\leq A\}}\frac{\#(\mathcal{D}_n\cap \mathbb{Z}_{i}(p))}{\#\mathcal{D}_n}\\
&\leq \frac{K}{\varepsilon}\sum_{p\in\mathcal{P},p>M}\frac{1}{p}\sum_{i=1}^{d}F_i(p)\1_{\{|F_i(p)|\leq A\}},
\end{align*}
where we have utilized~\eqref{eq:additional_assumption_new01} with $a=p$ and $b=1$ for the last inequality. The proof of Theorem~\ref{thm:main_F_positive} is complete, since the right-hand side converges to zero, as $M\to\infty$, by the second relation in~\eqref{eq:two_series}.

\section{Examples of suitable integer domains}\label{sec:examples_domains}

In this section we provide a series of examples of domains $\mathcal{D}_n$ that satisfy~\eqref{eq:van_Hove} and~\eqref{eq:additional_assumption_new01}. In particular, we show  that $\mathcal{R}_{n_1,n_2,\ldots,n_d}$ in \eqref{eq:rectangular_domain_def}, $\mathcal{S}_n$ and $\mathcal{H}_n$ mentioned in the introduction, are all admissible. Thus, under assumption~\eqref{eq:two_series} on $F$, the distributional convergence~\eqref{eq:main_convergence_F} holds true for all domains listed below.

\subsection{Sublevels of monotone functions}
\begin{assertion}\label{prop:example1}
Assume that $f:[1,\infty)^d \to \mathbb{R}$ is a coordinate-wise nondecreasing function such that, for every $j=1,\ldots,d$,
$$
\lim_{x_j\to \infty}f(x_1,\ldots,x_d)=\infty,
$$
provided $x_i\geq 1$, $i\neq j$, are fixed. Put
$$
\mathcal{D}_n:=\mathcal{D}_n^f=\{(x_1,\ldots,x_d)\in\N^d: f(x_1,\ldots,x_d)\leq n\}
$$
and
$$
\mathcal{D}_{n,i}:=\mathcal{D}_{n,i}^f=\{(x_1,\ldots,x_{i-1},x_{i+1},\ldots,x_d)\in\N^{d-1}: f(x_1,\ldots,x_{i-1},1,x_{i+1},\ldots,x_d)\leq n\},
$$
for $i=1,\ldots,d$. If, for every $i=1,\ldots,d$,
\begin{equation}\label{eq:boundary_negligible}
\lim_{n\to\infty}\frac{\#\mathcal{D}_{n,i}}{\#\mathcal{D}_{n}}=0,
\end{equation}
then the sequence $\mathcal{D}_n$, $n\in\mathbb{N}$, satisfies~\eqref{eq:van_Hove} and~\eqref{eq:additional_assumption_new01}.
\end{assertion}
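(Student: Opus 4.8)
The plan is to extract the one structural fact that the hypotheses force on $\mathcal{D}_n$ and then reduce both required conditions to elementary counting on it. Since $f$ is coordinate-wise nondecreasing, $\mathcal{D}_n$ is a finite lower set (order ideal) in $\N^d$: if $(x_1,\ldots,x_d)\in\mathcal{D}_n$ and $1\le y_k\le x_k$ for all $k$, then $f(y)\le f(x)\le n$, so $(y_1,\ldots,y_d)\in\mathcal{D}_n$. Finiteness follows because $f(1,\ldots,1,x_i,1,\ldots,1)\le f(x)\le n$ together with the coordinate-wise divergence of $f$ bounds each coordinate of any point of $\mathcal{D}_n$. I would record this order-ideal structure at the outset, as it drives everything.

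For the van Hove condition~\eqref{eq:van_Hove} I would first reduce to the unit vectors $c=\pm e_i$. This uses the subadditivity $\delta_n(c'+c'')\le\delta_n(c')+\delta_n(c'')$, which comes from the inclusion $A\Delta C\subseteq(A\Delta B)\cup(B\Delta C)$ applied to $A=\mathcal{D}_n$, $B=\mathcal{D}_n+c'$, $C=\mathcal{D}_n+c'+c''$, together with the translation-invariance $\delta_n(-c)=\delta_n(c)$; writing a general $c$ as a signed sum of $|c_i|$ copies of $e_i$ then yields $\delta_n(c)\le\sum_i|c_i|\,\delta_n(e_i)$. The core computation is $\delta_n(e_i)$. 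The lower-set property gives the inner layer for free: $\mathcal{D}_n\setminus(\mathcal{D}_n+e_i)=\{x\in\mathcal{D}_n:x_i=1\}$, which is in bijection with $\mathcal{D}_{n,i}$ by dropping the $i$-th coordinate. For the outer layer $(\mathcal{D}_n+e_i)\setminus\mathcal{D}_n$ I would slice $\mathcal{D}_n$ into columns along the $i$-th axis: with the other coordinates fixed, the admissible $x_i$ form an interval $\{1,\ldots,m\}$, and the map $x\mapsto x-e_i$ identifies the outer layer with the set of topmost points, exactly one per nonempty column. A column is nonempty iff its base point (with $x_i=1$) lies in $\mathcal{D}_n$, so the number of nonempty columns is again $\#\mathcal{D}_{n,i}$. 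Hence $\delta_n(e_i)=2\,\#\mathcal{D}_{n,i}$, and dividing by $\#\mathcal{D}_n$ and invoking~\eqref{eq:boundary_negligible} closes this part.

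For~\eqref{eq:additional_assumption_new01} I would prove the sharper bound with the universal constant $K=1$ by a disjoint-blocks argument. Fix $i\neq j$ and $a,b\in\N$, and to each $x\in\mathcal{D}_n\cap\mathbb{Z}_i(a)\cap\mathbb{Z}_j(b)$ attach the block $B(x):=\{x-re_i-se_j:0\le r<a,\ 0\le s<b\}$. Since $a\mid x_i$ and $b\mid x_j$ force $x_i\ge a$ and $x_j\ge b$, all points of $B(x)$ lie in $\N^d$ and are coordinate-wise $\le x$, so $B(x)\subseteq\mathcal{D}_n$ by monotonicity. The divisibility conditions pin down the residues of the $i$-th and $j$-th coordinates, so blocks attached to distinct such $x$ are disjoint; counting the $ab$ points in each block gives $ab\,\#(\mathcal{D}_n\cap\mathbb{Z}_i(a)\cap\mathbb{Z}_j(b))\le\#\mathcal{D}_n$, which is~\eqref{eq:additional_assumption_new01} with $K=1$.

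I expect the only genuinely delicate step to be the outer-boundary count in the van Hove part. The inner slab $\{x_i=1\}$ is manifestly governed by $\#\mathcal{D}_{n,i}$, but the outer layer $(\mathcal{D}_n+e_i)\setminus\mathcal{D}_n$ is not contained in any fixed slab, and the point is to see that it is nevertheless controlled by the same quantity. This is exactly where the one-topmost-point-per-column feature of an order ideal is essential; once it is in place, the remainder is routine bookkeeping on lower sets.
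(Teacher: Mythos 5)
Your proposal is correct and follows essentially the same route as the paper: reduce the van Hove condition to the unit shifts $\pm e_i$, identify $\mathcal{D}_n\setminus(\mathcal{D}_n+e_i)$ with $\mathcal{D}_{n,i}$, control $(\mathcal{D}_n+e_i)\setminus\mathcal{D}_n$ by the one-topmost-point-per-column observation (the paper phrases this as ``at most one $x_1$ with $f(x_1-1,\ldots)\le n<f(x_1,\ldots)$''), and obtain \eqref{eq:additional_assumption_new01} with $K=1$ by monotonicity. Your disjoint-blocks count is just the dual phrasing of the paper's decomposition of $\#\mathcal{D}_n$ over residue classes modulo $(a,b)$, so the two arguments coincide in substance.
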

\begin{proof}
Let us first verify~\eqref{eq:van_Hove}. According to Proposition~\ref{prop:def_equivalence} in Appendix~\ref{sec:appendix}, it is sufficient to check~\eqref{eq:van_Hove} for $c=e_i$, $i=1,\ldots,d$, where $e_1,\ldots,e_d$ denotes the standard basis of $\mathbb{R}^d$. Note that $\mathcal{D}_{n}\setminus (\mathcal{D}_n+e_i)=\mathcal{D}_{n,i}$. Thus,~\eqref{eq:boundary_negligible} yields that for $i=1,\ldots,d$,
$$
\lim_{n\to\infty}\frac{\#(\mathcal{D}_{n}\setminus (\mathcal{D}_n+e_i))}{\#\mathcal{D}_n}=0.
$$
It remains to check that for $i=1,\ldots,d$,
\begin{equation}\label{eq:example1_proof1}
\lim_{n\to\infty}\frac{\#((\mathcal{D}_{n}+e_i)\setminus \mathcal{D}_n)}{\#\mathcal{D}_n}=0.
\end{equation}
Without loss of generality, we shall do this for $i=1$. Note that
$$
(\mathcal{D}_{n}+e_1)\setminus \mathcal{D}_n=\{(x_1,\ldots,x_d)\in\N^d:x_1\geq 2,f(x_1-1,x_2,\ldots,x_d)\leq n,f(x_1,\ldots,x_d)>n\}.
$$
For every fixed collection $(x_2,\ldots,x_d)\in\N^{d-1}$ and $n\in\N$, there exists at most one $x_1\geq 2$, $x_1\in\N$, such that
$$
f(x_1-1,x_2,\ldots,x_d)\leq n\quad\text{and}\quad f(x_1,\ldots,x_d)>n,
$$
since $f$ is monotone in $x_1$. Therefore,
\begin{align*}
\#((\mathcal{D}_{n}+e_1)\setminus \mathcal{D}_n)&=\sum_{x_2=1}^{\infty}\cdots\sum_{x_d=1}^{\infty}\1_{\{\text{there exists }x_1\geq 2\text{ such that }f(x_1-1,x_2,\ldots,x_d)\leq n,f(x_1,\ldots,x_d)>n\}}\\
&\leq \sum_{x_2=1}^{\infty}\cdots\sum_{x_d=1}^{\infty}\1_{\{\text{there exists }x_1\geq 2\text{ such that }f(x_1-1,x_2,\ldots,x_d)\leq n\}}=\sum_{x_2=1}^{\infty}\cdots\sum_{x_d=1}^{\infty}\1_{\{f(1,x_2,\ldots,x_d)\leq n\}}\\
&=\#\mathcal{D}_{n,1}.
\end{align*}
This proves~\eqref{eq:example1_proof1} for $i=1$.

We shall now prove that~\eqref{eq:additional_assumption_new01} holds, for all $i,j=1,\ldots,d$, with $K=1$. For notational simplicity, we shall do this only for $i=1$ and $j=2$. The monotonicity of $f$ implies that, for all $a,b\in\N$,
\begin{align*}
\#\mathcal{D}_n&=\sum_{j=0}^{a-1}\sum_{k=0}^{b-1}\left(\sum_{x_1=1}^{\infty}\sum_{x_2=1}^{\infty}\cdots\sum_{x_d=1}^{\infty}\1_{\{f(ax_1-j,b x_2-k,x_3,\ldots,x_d)\leq n\}}\right)\\
&\geq ab \sum_{x_1=1}^{\infty}\sum_{x_2=1}^{\infty}\cdots\sum_{x_d=1}^{\infty}\1_{\{f(a x_1,b x_2,x_3,\ldots,x_d)\leq n\}}\\
&=ab\#(\mathcal{D}_n\cap \mathbb{Z}_{1}(a)\cap\mathbb{Z}_2(b)).
\end{align*}
The proof of Proposition~\ref{prop:example1} is complete.
\end{proof}

Proposition~\ref{prop:example1} yields the following explicit examples.

\begin{example}[Rectangular domains]
Let $f_1,\ldots,f_d:[1,\infty)\to [1,\infty)$ be strictly increasing continuous functions. Putting
$f(x_1,\ldots,x_d):=\max(f_1^{-1}(x_1),\ldots,f_d^{-1}(x_d))$, we obtain
\begin{equation*}
   \mathcal{D}_n=\mathcal{R}_{f_1(n),\ldots,f_d(n)}=([1,f_1(n)]\times\cdots \times[1,f_d(n)])\cap \N^d.
\end{equation*}
Condition~\eqref{eq:boundary_negligible} is fulfilled if
$\lim_{x\to\infty}f_i(x)=\infty$, for every $i=1,\ldots,d$.
\end{example}

\begin{example}[Tetrahedral domains]
Let $a_1,\ldots,a_d>0$ be fixed positive real numbers. The sequence of tetrahedral sets
$$
\mathcal{D}_n=\mathcal{T}_n:=\{(x_1,\ldots,x_d)\in\N^d:a_1x_1+\cdots+a_dx_d\leq n\}
$$
satisfies~\eqref{eq:van_Hove} and~\eqref{eq:additional_assumption_new01}. Indeed,
$$
\#\mathcal{T}_n~\sim~\frac{1}{d! a_1 \cdots a_d}n^{d},\quad n\to\infty,
$$
whereas, for $i=1,\ldots,d$,
$$
\#\mathcal{T}_{n,i}~\sim~\frac{a_i}{(d-1)!a_1\cdots a_d} n^{d-1},\quad n\to\infty.
$$
Thus, Proposition~\ref{prop:example1} is applicable.

\end{example}

\begin{example}[Hyperbolic domains]
\label{ex:l=d}
Let $f(x_1,\ldots,x_d)=x_1 \cdots x_d$. Then the sequence of sets
$$
\mathcal{D}_n=\mathcal{H}_n:=\{(x_1,\ldots,x_d)\in\N^d: x_1\cdots x_d\leq n\}
$$
satisfies~\eqref{eq:van_Hove} and~\eqref{eq:additional_assumption_new01}. Indeed, according to Proposition 4.1 in~\cite{IksMarRas:2022}.
$$
\#\mathcal{D}_n~\sim~\frac{n\log^{d-1}n}{(d-1)!},\quad n\to\infty,
$$
and, for every $i=1,\ldots,d$,
$$
\#\mathcal{D}_{n,i}~\sim~\frac{n\log^{d-2}n}{(d-2)!},\quad n\to\infty.
$$
Thus, Proposition~\ref{prop:example1} is applicable.
\end{example}

\begin{example}[Further hyperbolic domains]\label{ex:further_hyperbolic}
Fix $2\leq \ell\leq d$. Define the $\ell$-th standard symmetric polynomial in $d$ variables by
$$f(x_1,\ldots,x_d)=P_\ell(x_1,\ldots, x_d) := \sum_{1\leq i_1<\cdots <i_\ell\leq d} x_{i_1}\cdots x_{i_\ell}.
$$ 
The associated domain is
$$
\mathcal{D}_n=\mathcal{H}_{\ell,d}(n):=\{(x_1,\ldots,x_d)\in\N^d: P_\ell(x_1,\ldots, x_d)\leq n\}.
$$
Example~\ref{ex:l=d} corresponds to the particular case $\ell=d$. If now $2\leq \ell<d$, then Proposition 4.4 in~\cite{IksMarRas:2022} entails that 
$\#\mathcal{D}_n\sim C(d,\ell)n^{d/\ell}$, for some positive constant $C(d,\ell)>0$. Furthermore, by symmetry $\# \mathcal{D}_{n,i}=\#\mathcal{D}_{n,1}$ for all $i=1,\ldots,d$, and
\begin{align*}
\mathcal{D}_{n,1}&=\{(x_2,\ldots,x_d)\in\N^d:P_\ell(1, x_2,\ldots, x_d)\leq n\}\\
&=\{(x_2,\ldots,x_d)\in\N^d:P_\ell(x_2,\ldots, x_d)+P_{\ell-1}(x_2,\ldots, x_d)\leq n\}\\
&\subset \{(x_2,\ldots,x_d)\in\N^d: P_\ell(x_2,\ldots, x_d)\leq n\}=\mathcal{H}_{\ell,d-1}(n).
\end{align*}
Thus, $\mathcal{D}_{n,1}\subseteq \mathcal{H}_{\ell,d-1}(n)$ and thereupon $\#\mathcal{D}_{n,1}\leq \#\mathcal{H}_{\ell,d-1}(n)$. If $\ell<d-1$, then
$$
\#\mathcal{H}_{\ell,d-1}(n)\sim C(d-1,\ell)n^{(d-1)/\ell},\quad n\to\infty,
$$
whereas if $\ell=d-1$,
$$
\#\mathcal{H}_{\ell,d-1}(n)=\#\mathcal{H}_{d-1,d-1}(n)\sim \frac{n\log^{d-2} n}{(d-2)!},\quad n\to\infty.
$$
In both cases $\lim_{n\to\infty}\#\mathcal{H}_{\ell,d-1}(n)/\#\mathcal{D}_n=0$. Summarizing, Proposition~\ref{prop:example1} is applicable to $\mathcal{D}_n=\mathcal{H}_{\ell,d}(n)$.
\end{example}

\subsection{Dilations of a convex body}

\begin{assertion}\label{prop:example2}
Let $\mathcal{D}\subset[0,\infty)^d$ be a compact convex set with nonempty interior and $a_n$, $n\in\N$, be a sequence of positive numbers such that $\lim_{n\to\infty}a_n=\infty$. Then, the following sequence of sets satisfies~\eqref{eq:van_Hove} and~\eqref{eq:additional_assumption_new01}:
$$
\mathcal{D}_n:=a_n\mathcal{D}\cap \N^d.
$$
\end{assertion}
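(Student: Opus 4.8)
The plan is to reduce everything to two elementary geometric facts about $\mathcal{D}$ and then count lattice points. Since $\mathcal{D}$ is compact and contained in $[0,\infty)^d$, there is $R>0$ with $\mathcal{D}\subseteq[0,R]^d$, so that $K_n:=a_n\mathcal{D}\subseteq[0,a_nR]^d$. On the other hand, ${\rm Int}(\mathcal{D})$ is a nonempty open subset of $[0,\infty)^d$ and is therefore contained in $(0,\infty)^d$; hence $\mathcal{D}$ contains a closed box $Q=\prod_{k=1}^d[u_k,v_k]$ with $0<u_k<v_k$ for all $k$. First I would record a lower bound on the cardinality: for $n$ so large that $a_nu_k\geq 1$ for all $k$, every integer point of $a_nQ$ lies in $\N^d\cap K_n=\mathcal{D}_n$, whence
$$\#\mathcal{D}_n\;\geq\;\prod_{k=1}^d\bigl(\lfloor a_nv_k\rfloor-\lceil a_nu_k\rceil+1\bigr)\;\geq\;c\,a_n^d$$
for some $c>0$ and all large $n$. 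This quantity is the denominator in both \eqref{eq:van_Hove} and \eqref{eq:additional_assumption_new01}.

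Next I would verify van Hove. By Proposition~\ref{prop:def_equivalence} it suffices to check \eqref{eq:van_Hove} for the basis vectors $c=e_i$; I treat $i=1$. The point is that convexity makes each line in direction $e_1$ meet $K_n$ in an interval, so for every fixed $(x_2,\ldots,x_d)$ there is at most one integer $x_1$ with $x\in K_n$ but $x-e_1\notin K_n$, and at most one with $x-e_1\in K_n$ but $x\notin K_n$. Since the projection of $K_n$ onto the last $d-1$ coordinates is contained in $[0,a_nR]^{d-1}$, there are only $O(a_n^{d-1})$ such fibers. Thus each of $\mathcal{D}_n\setminus(\mathcal{D}_n+e_1)$ and $(\mathcal{D}_n+e_1)\setminus\mathcal{D}_n$ consists of at most one integer point per fiber, together with points lying on coordinate hyperplanes $x_k=\text{const}$ arising from the $\N^d$ (rather than $\mathbb{Z}^d$) truncation; both contributions are $O(a_n^{d-1})$. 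Hence $\delta_n(e_1)=\#(\mathcal{D}_n\Delta(\mathcal{D}_n+e_1))=O(a_n^{d-1})$, and dividing by $\#\mathcal{D}_n\geq c\,a_n^d$ gives $\delta_n(e_1)/\#\mathcal{D}_n=O(1/a_n)\to 0$, which is \eqref{eq:van_Hove}.

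For \eqref{eq:additional_assumption_new01} I would dispense with convexity and use only the outer box, in the spirit of the bound in the proof of Proposition~\ref{prop:example1}. Fix $i\neq j$, say $i=1$, $j=2$. Every $x\in\mathcal{D}_n\cap\mathbb{Z}_1(a)\cap\mathbb{Z}_2(b)$ satisfies $x\in\N^d$, $x\in[0,a_nR]^d$, $a\mid x_1$ and $b\mid x_2$; the number of admissible values is at most $a_nR/a$ for $x_1$, at most $a_nR/b$ for $x_2$, and at most $a_nR$ for each of $x_3,\ldots,x_d$, so
$$\#\bigl(\mathcal{D}_n\cap\mathbb{Z}_1(a)\cap\mathbb{Z}_2(b)\bigr)\;\leq\;\frac{(a_nR)^d}{ab}.$$
Combining this with $\#\mathcal{D}_n\geq c\,a_n^d$ yields the ratio $\leq (R^d/c)/(ab)$ for all large $n$ and all $a,b\in\N$, i.e.\ \eqref{eq:additional_assumption_new01} with $K=R^d/c$ on that range of $n$. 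The finitely many remaining (nonempty) $\mathcal{D}_n$ are absorbed by enlarging $K$: for each such $n$ the same outer-box bound gives $ab\,\#(\mathcal{D}_n\cap\mathbb{Z}_1(a)\cap\mathbb{Z}_2(b))/\#\mathcal{D}_n\leq R^da_n^d$, a finite quantity, and the maximum over this finite set of indices is finite.

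The lattice-point counts are routine; the only place that demands care is the van Hove boundary estimate, where I must ensure that passing from $\mathbb{Z}^d$ to $\N^d$ (the coordinate hyperplanes $x_k=0$) contributes only $O(a_n^{d-1})$ and does not spoil the estimate — this is exactly where compactness and the inclusion ${\rm Int}(\mathcal{D})\subseteq(0,\infty)^d$ enter. I expect this bookkeeping, rather than any substantial idea, to be the main obstacle.
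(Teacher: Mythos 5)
Your proof is correct, but it takes a genuinely different route from the paper's for the van Hove condition \eqref{eq:van_Hove}. The paper passes to the continuous van Hove condition via Proposition~\ref{prop:continuous-discrete} and then controls ${\rm Vol}(\partial(a_n\mathcal{D})\oplus B^d_\varepsilon(0))$ by observing that $\partial\mathcal{D}$ is $(d-1)$-rectifiable and invoking Federer's theorem on Minkowski content; you instead reduce to $c=e_i$ via Proposition~\ref{prop:def_equivalence} and exploit convexity directly, noting that each fiber in direction $e_i$ meets $a_n\mathcal{D}$ in an interval, so each of $\mathcal{D}_n\setminus(\mathcal{D}_n+e_i)$ and $(\mathcal{D}_n+e_i)\setminus\mathcal{D}_n$ contributes at most a bounded number of points per fiber, giving $\delta_n(e_i)=O(a_n^{d-1})$ against $\#\mathcal{D}_n\geq c\,a_n^d$. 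This is essentially the same one-point-per-fiber mechanism the paper uses for monotone sublevel sets in Proposition~\ref{prop:example1}, transplanted to the convex setting; it is more elementary (no geometric measure theory) and yields an explicit rate $O(1/a_n)$, whereas the paper's argument is heavier but would survive weakening convexity to, say, bounded sets with rectifiable boundary of controlled Hausdorff measure. For \eqref{eq:additional_assumption_new01} your argument is the same as the paper's in substance: an outer box of side $O(a_n)$ combined with the lower bound $\#\mathcal{D}_n\geq c\,a_n^d$ coming from an interior cube of ${\rm Int}(\mathcal{D})\subseteq(0,\infty)^d$; the paper merely packages this as Proposition~\ref{prop:add_assump_suff2}. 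Your handling of the finitely many small $n$ (absorbing them into the constant $K$) addresses a point the paper leaves implicit in its standing assumption that the $\mathcal{D}_n$ are non-empty.
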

\begin{proof}
For the proof of~\eqref{eq:van_Hove}, we shall use Proposition~\ref{prop:continuous-discrete}. Put
$$
V:=\mathcal{D}\cap (0,\infty)^d,\quad V_n:=a_n  V=a_n\mathcal{D}\cap (0,\infty)^d,
$$
and note that $\mathcal{D}_n = V_n\cap\mathbb{N}^d$. Let us check that~\eqref{eq:van_hove_continuous} holds for the sequence $V_n$. First of all, since $\mathcal{D}$ is compact, convex and has a non-empty interior, it holds 
$$
\mathcal{D}={\rm cl}({\rm Int}(\mathcal{D}))={\rm cl}({\rm Int}(\mathcal{D})\cap (0,\infty)^d)={\rm cl}({\rm Int}(\mathcal{D}\cap (0,\infty)^d))={\rm cl}(V),
$$
and, thereupon,
$$
\partial V= {\rm cl}(V)\setminus {\rm Int}(V)= {\rm cl}(V)\setminus {\rm Int}(\mathcal{D})=\mathcal{D}\setminus {\rm Int}(\mathcal{D})=\partial \mathcal{D}.
$$
Further, observe that ${\rm Vol}(V)>0$ and, denoting $B^{d}_{\varepsilon}(0)$ the ball $\{(x_1,\ldots,x_d)\in\mathbb R^d : x_1^2+\cdots +x_d^2 <\varepsilon\}$ and $A\oplus B:=\{x+y:x\in A,y\in B\}$ the Minkowski addition,
\begin{equation}\label{eq:example2_proof1}
\frac{{\rm Vol}(\partial V_n\oplus B^{d}_{\varepsilon}(0))}{{\rm Vol}(V_n)}=\frac{{\rm Vol}(a_n(\partial V\oplus B^{d}_{\varepsilon/a_n}(0)))}{{\rm Vol}(a_n V)}=\frac{{\rm Vol}(\partial V\oplus B^{d}_{\varepsilon/a_n}(0))}{{\rm Vol}(V)}=\frac{{\rm Vol}(\partial \mathcal{D}\oplus B^{d}_{\varepsilon/a_n}(0))}{{\rm Vol}(V)}.
\end{equation}
Since $\mathcal{D}$ is a compact convex set, its boundary $\partial \mathcal{D}$ is $(d-1)$-rectifiable subset of $\R^d$, that is, can be represented as the image of a Lipschitz function\footnote{As $h$ one can take, for example, the function $\partial B_R(0)\ni x\mapsto \pi_{\mathcal{D}}(x)$, where $R>0$ is such that $\mathcal{D}\subseteq B_R(0)$ and $\pi_{\mathcal{D}}(x)$ is a unique closest to $x$ point in $\mathcal{D}$ (metric projection on $\mathcal{D}$).} $h$ defined on a bounded subset of $\mathbb{R}^{d-1}$ and taking values in $\mathbb{R}^d$. Thus, by Theorem 3.2.39 in \cite{Federer_book},
$$
\lim_{n\to\infty}a_n{\rm Vol}(\partial \mathcal{D}\oplus B^{d}_{\varepsilon/a_n}(0))=2\varepsilon \mathcal{H}_{d-1}(\partial \mathcal{D})<\infty,
$$
where $\mathcal{H}_{d-1}$ is the $(d-1)$-dimensional Hausdorff  measure in $\mathbb{R}^d$. Summarizing, we have shown that the right-hand side of~\eqref{eq:example2_proof1} converges to zero as $n\to\infty$.

For the proof of~\eqref{eq:additional_assumption_new01}, we employ Proposition~\ref{prop:add_assump_suff2} from Appendix~\ref{sec:appendix}.

For $i=1,\ldots,d$, put
\begin{align*}
m_i(\mathcal{D})&:=\inf\{x_i\geq 0:(x_1,\ldots,x_i,\ldots,x_d)\in\mathcal{D}\},\\
M_i(\mathcal{D})&:=\sup\{x_i\geq 0:(x_1,\ldots,x_i,\ldots,x_d)\in\mathcal{D}\},
\end{align*}
and note that $0\leq m_i(\mathcal{D})<M_i(\mathcal{D})<\infty$. Here the second inequality is strict since $\mathcal{D}$ has a non-empty interior; the last inequality follows from the compactness of $\mathcal{D}$. Proposition~\ref{prop:add_assump_suff2} is applicable with the rectangle
$$
\Pi_n:=\left(\varprod_{i=1}^{d}\Big[\lfloor a_n m_i(\mathcal{D})\rfloor ,\lceil a_n M_i(\mathcal{D})\rceil\Big]\right)\bigcap \N^d.
$$
By construction
$$
a_n\mathcal{D} \subset a_n \left(\varprod_{i=1}^{d}\Big[m_i(\mathcal{D}), M_i(\mathcal{D})\Big]\right)\subset \left(\varprod_{i=1}^{d}\Big[\lfloor a_n m_i(\mathcal{D})\rfloor ,\lceil a_n M_i(\mathcal{D})\rceil\Big]\right).
$$
It remains to note that as $n\to\infty$,
\begin{equation}
\label{eq:example2_proof2}
   \#\Pi_n\sim a_n^d \prod_{i=1}^{d} (M_i(\mathcal{D})-m_i(\mathcal{D})),
\end{equation}
and also
\begin{equation}\label{eq:example2_proof3}
\liminf_{n\to\infty}\frac{\#\mathcal{D}_n}{a_n^d}>0,
\end{equation}
which is a consequence of the fact that $\mathcal{D}$ has a non-empty interior and, therefore, contains a small $d$-dimensional cube in the interior. Relations~\eqref{eq:example2_proof2} and~\eqref{eq:example2_proof3} imply
$$
\limsup_{n\to\infty}\frac{\#\Pi_n}{\#\mathcal{D}_n}<\infty.
$$
The proof of Proposition~\ref{prop:example2} is complete.
\end{proof}

\begin{example}[Spherical domains]
Put $\mathcal{B}:=\{(x_1,\ldots,x_d)\in [0,\infty)^d: x_1^2+\cdots+x_d^2\leq 1\}$. Then the sequence of discrete balls
$$
\mathcal{D}_n=\mathcal{S}_n:=\sqrt{n}\mathcal{B}\cap \N^d=\{(x_1,\ldots,x_d)\in\N^d: x_1^2+\cdots+x_d^2\leq n\}
$$
satisfies~\eqref{eq:van_Hove} and~\eqref{eq:additional_assumption_new01} by Proposition~\ref{prop:example2}.
\end{example}

Truncated cones, such as Weyl chambers, also satisfy~\eqref{eq:van_Hove} and~\eqref{eq:additional_assumption_new01}.
\begin{example}[Truncated Weyl chambers]
Let $\mathcal{A}:=\{(x_1,\ldots,x_d)\in [0,\infty): x_1\leq \cdots \leq x_d\leq 1\}$. Then the sequence of sets
$$
\mathcal{D}_n=\mathcal{A}_n:=n\mathcal{A}\cap \N^d=\{(x_1,\ldots,x_d)\in\N^d: x_1 \leq \cdots\leq x_d \leq n\}
$$
satisfies~\eqref{eq:van_Hove} and~\eqref{eq:additional_assumption_new01} by Proposition~\ref{prop:example2}.
\end{example}

\section{Set-theoretic operations preserving properties~\eqref{eq:van_Hove} and~\eqref{eq:additional_assumption_new01}}\label{sec:set-theoretic-operations}

In this section we discuss stability properties of sets satisfying~\eqref{eq:van_Hove} and~\eqref{eq:additional_assumption_new01} with respect to the standard set-theoretic operations.

First, immediately from the definitions, one obtains the following.
\begin{assertion}\label{prop:unions}
Let $\mathcal{D}^{(1)}_n$ and $\mathcal{D}^{(2)}_n$ be two sequences of sets satisfying~\eqref{eq:van_Hove} and~\eqref{eq:additional_assumption_new01}. Then the sequence~$\mathcal{D}_n:=\mathcal{D}_n^{(1)}\cup \mathcal{D}^{(2)}_n$ satisfies~\eqref{eq:van_Hove} and~\eqref{eq:additional_assumption_new01}.
\end{assertion}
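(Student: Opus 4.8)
The plan is to observe that both \eqref{eq:van_Hove} and \eqref{eq:additional_assumption_new01} are inherited under unions purely through elementary set-theoretic inequalities, so the whole argument reduces to bookkeeping with cardinalities. Throughout I would write $\delta_n^{(k)}(c):=\#(\mathcal{D}_n^{(k)}\Delta(\mathcal{D}_n^{(k)}+c))$ for $k=1,2$, and let $K_1,K_2>0$ be the constants for which \eqref{eq:additional_assumption_new01} holds for $\mathcal{D}_n^{(1)}$ and $\mathcal{D}_n^{(2)}$, respectively. The single structural fact I will use on denominators is that a union is at least as large as either of its parts: $\#\mathcal{D}_n\geq \#\mathcal{D}_n^{(k)}$ for $k=1,2$, since $\mathcal{D}_n\supseteq\mathcal{D}_n^{(k)}$.

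For \eqref{eq:van_Hove} I would work with the equivalent formulation $\delta_n(c)/\#\mathcal{D}_n\to 0$. Since $(\mathcal{D}_n^{(1)}\cup\mathcal{D}_n^{(2)})+c=(\mathcal{D}_n^{(1)}+c)\cup(\mathcal{D}_n^{(2)}+c)$, the elementary inclusion $(A_1\cup A_2)\Delta(B_1\cup B_2)\subseteq (A_1\Delta B_1)\cup(A_2\Delta B_2)$, applied with $B_k=\mathcal{D}_n^{(k)}+c$, gives $\delta_n(c)\leq \delta_n^{(1)}(c)+\delta_n^{(2)}(c)$. Dividing by $\#\mathcal{D}_n$ and using $\#\mathcal{D}_n\geq\#\mathcal{D}_n^{(k)}$ in each summand yields
$$
\frac{\delta_n(c)}{\#\mathcal{D}_n}\leq \frac{\delta_n^{(1)}(c)}{\#\mathcal{D}_n^{(1)}}+\frac{\delta_n^{(2)}(c)}{\#\mathcal{D}_n^{(2)}},
$$
and the right-hand side tends to $0$ because each sequence satisfies \eqref{eq:van_Hove}. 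As this holds for every $c\in\mathbb{Z}^d$, the union satisfies the regular growth condition.

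For \eqref{eq:additional_assumption_new01} I would exploit that intersection distributes over union: $\mathcal{D}_n\cap\mathbb{Z}_i(a)\cap\mathbb{Z}_j(b)$ is the union over $k=1,2$ of $\mathcal{D}_n^{(k)}\cap\mathbb{Z}_i(a)\cap\mathbb{Z}_j(b)$. Subadditivity of cardinality then gives $\#(\mathcal{D}_n\cap\mathbb{Z}_i(a)\cap\mathbb{Z}_j(b))\leq\#(\mathcal{D}_n^{(1)}\cap\mathbb{Z}_i(a)\cap\mathbb{Z}_j(b))+\#(\mathcal{D}_n^{(2)}\cap\mathbb{Z}_i(a)\cap\mathbb{Z}_j(b))$. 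Bounding the $k$-th term by $(K_k/(ab))\#\mathcal{D}_n^{(k)}\leq (K_k/(ab))\#\mathcal{D}_n$, via the assumption for $\mathcal{D}_n^{(k)}$ together with $\#\mathcal{D}_n^{(k)}\leq\#\mathcal{D}_n$, and dividing through by $\#\mathcal{D}_n$, establishes \eqref{eq:additional_assumption_new01} for $\mathcal{D}_n$ with $K:=K_1+K_2$, uniformly in $i\neq j$, $a,b,n$.

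There is no genuine obstacle here: the proposition is immediate once one pairs subadditivity of the counting measure (used on the numerators, via the two set inclusions above) with the monotonicity $\#\mathcal{D}_n\geq\#\mathcal{D}_n^{(k)}$ (used on the denominators). The only point worth flagging is that one must resist placing $\#\mathcal{D}_n^{(1)}+\#\mathcal{D}_n^{(2)}$ in a denominator: overlaps make the union strictly smaller in general, so the clean estimate comes from keeping the two numerator contributions separate and applying $\#\mathcal{D}_n\geq\#\mathcal{D}_n^{(k)}$ term by term.
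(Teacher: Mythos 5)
Your proof is correct, and it is exactly the elementary verification the paper has in mind: Proposition~\ref{prop:unions} is stated there as following ``immediately from the definitions'' with no written proof, and your combination of subadditivity of cardinality on the numerators (via the symmetric-difference inclusion and distributivity of intersection over union) with the monotonicity $\#\mathcal{D}_n\geq\#\mathcal{D}_n^{(k)}$ on the denominators is the intended argument. Both halves, including the constant $K=K_1+K_2$, check out.
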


As far as intersections and differences of sets are concerned, additional assumptions ensuring that the resulting sets are not small have to imposed. The following holds true.

\begin{assertion}\label{prop:differences}
Let $\mathcal{D}^{(1)}_n$ and $\mathcal{D}^{(2)}_n$ be two sequences of sets satisfying~\eqref{eq:van_Hove}. Suppose further that
\begin{equation}
\label{eq:difference_assump}
   \mathcal{D}^{(2)}_n\subset \mathcal{D}^{(1)}_n\quad\text{and}\quad \limsup_{n\to\infty}\frac{\#\mathcal{D}^{(2)}_n}{\#\mathcal{D}^{(1)}_n}\in [0,1).
\end{equation}
Then the sequence~$\mathcal{D}_n:=\mathcal{D}_n^{(1)}\setminus \mathcal{D}^{(2)}_n$ satisfies~\eqref{eq:van_Hove}. Moreover, if $\mathcal{D}^{(1)}_n$ satisfies~\eqref{eq:additional_assumption_new01}, then so does $\mathcal{D}_n$.
\end{assertion}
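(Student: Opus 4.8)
The plan is to rephrase \eqref{eq:van_Hove} through the quantity $\delta_n$ of \eqref{eq:delta} and to base both assertions on a single comparability estimate between $\#\mathcal{D}_n$ and $\#\mathcal{D}_n^{(1)}$. Write $\delta_n^{(k)}(c):=\#(\mathcal{D}_n^{(k)}\Delta(\mathcal{D}_n^{(k)}+c))$ for $k=1,2$. Since $\mathcal{D}_n^{(2)}\subset\mathcal{D}_n^{(1)}$, one has $\#\mathcal{D}_n=\#\mathcal{D}_n^{(1)}-\#\mathcal{D}_n^{(2)}$. The strict inequality in \eqref{eq:difference_assump} furnishes $\alpha\in[0,1)$ and $N\in\N$ with $\#\mathcal{D}_n^{(2)}\leq\alpha\,\#\mathcal{D}_n^{(1)}$ for $n\geq N$, so that $\#\mathcal{D}_n\geq(1-\alpha)\#\mathcal{D}_n^{(1)}$ there; combining this with the finitely many (non-empty) initial terms, I would fix a constant $L\geq1$ such that $\#\mathcal{D}_n^{(1)}\leq L\,\#\mathcal{D}_n$ for all $n$. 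This one inequality powers both parts of the proof.

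For the van Hove property of $\mathcal{D}_n:=\mathcal{D}_n^{(1)}\setminus\mathcal{D}_n^{(2)}$, the key algebraic input is the identity $(A\setminus B)+c=(A+c)\setminus(B+c)$ together with the elementary inclusion
$$
(A\setminus B)\,\Delta\,(A'\setminus B')\subseteq(A\,\Delta\,A')\cup(B\,\Delta\,B'),
$$
which I would verify by a short case analysis on which of the defining memberships fails. Taking $A=\mathcal{D}_n^{(1)}$, $B=\mathcal{D}_n^{(2)}$, $A'=\mathcal{D}_n^{(1)}+c$, $B'=\mathcal{D}_n^{(2)}+c$ yields $\delta_n(c)\leq\delta_n^{(1)}(c)+\delta_n^{(2)}(c)$. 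Dividing by $\#\mathcal{D}_n$ and using $\#\mathcal{D}_n^{(2)}\leq\#\mathcal{D}_n^{(1)}\leq L\,\#\mathcal{D}_n$, I obtain
$$
\frac{\delta_n(c)}{\#\mathcal{D}_n}\leq L\left(\frac{\delta_n^{(1)}(c)}{\#\mathcal{D}_n^{(1)}}+\frac{\delta_n^{(2)}(c)}{\#\mathcal{D}_n^{(2)}}\right),
$$
whose right-hand side tends to $0$ as $n\to\infty$, since both $\mathcal{D}_n^{(1)}$ and $\mathcal{D}_n^{(2)}$ satisfy \eqref{eq:van_Hove}. Hence $\mathcal{D}_n$ satisfies \eqref{eq:van_Hove}.

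For \eqref{eq:additional_assumption_new01}, suppose $\mathcal{D}_n^{(1)}$ satisfies it with constant $K_1$. Monotonicity of cardinality under $\mathcal{D}_n\subseteq\mathcal{D}_n^{(1)}$ gives, for all $i\neq j$ and $a,b\in\N$,
$$
\#(\mathcal{D}_n\cap\mathbb{Z}_i(a)\cap\mathbb{Z}_j(b))\leq\#(\mathcal{D}_n^{(1)}\cap\mathbb{Z}_i(a)\cap\mathbb{Z}_j(b))\leq\frac{K_1}{ab}\,\#\mathcal{D}_n^{(1)},
$$
and dividing by $\#\mathcal{D}_n$ and inserting $\#\mathcal{D}_n^{(1)}\leq L\,\#\mathcal{D}_n$ produces the required bound with $K=K_1L$. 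The computations are routine; the one point demanding care---and the closest thing to an obstacle---is that the constant $L$ must hold \emph{uniformly} in $n$, because \eqref{eq:additional_assumption_new01} is quantified over all $n\in\N$. This is precisely where the strictness $\limsup<1$ in \eqref{eq:difference_assump} is indispensable: it yields $\alpha<1$ and hence the asymptotic bound $\#\mathcal{D}_n\geq(1-\alpha)\#\mathcal{D}_n^{(1)}$, while non-emptiness of the finitely many initial sets $\mathcal{D}_n$ absorbs the remaining indices into $L$.
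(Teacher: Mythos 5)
Your proof is correct and follows essentially the same route as the paper's: the same inclusion $(A\setminus B)\,\Delta\,(C\setminus D)\subseteq(A\,\Delta\,C)\cup(B\,\Delta\,D)$ for the van Hove part, and the same comparability bound $\#\mathcal{D}_n^{(1)}\leq L\,\#\mathcal{D}_n$ derived from the strict limsup in \eqref{eq:difference_assump} for both parts. Your explicit remark that the constant must hold uniformly in $n$ (absorbing the finitely many initial indices) is a slightly more careful rendering of the paper's appeal to $\sup_{n\in\N}\#\mathcal{D}_n^{(1)}/\#\mathcal{D}_n<\infty$, but it is not a different argument.
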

\begin{proof}
Using the inclusion $(A\setminus B)\Delta (C\setminus D)\subseteq (A\Delta C)\cup(B\Delta D)$ we obtain, for every fixed $c\in\mathbb{Z}^d$,
\begin{multline*}
\frac{\#(\mathcal{D}_n\Delta (\mathcal{D}_n+c))}{\#\mathcal{D}_n}=\frac{\#((\mathcal{D}_n^{(1)}\setminus \mathcal{D}^{(2)}_n)\Delta ((\mathcal{D}_n^{(1)}+c)\setminus (\mathcal{D}^{(2)}_n+c)))}{\#\mathcal{D}_n}\\
\leq \frac{\#(\mathcal{D}_n^{(1)}\Delta (\mathcal{D}_n^{(1)}+c))}{\#\mathcal{D}^{(1)}_n}\frac{\#\mathcal{D}^{(1)}_n}{\#\mathcal{D}_n}+\frac{\#(\mathcal{D}_n^{(2)}\Delta (\mathcal{D}_n^{(2)}+c))}{\#\mathcal{D}^{(2)}_n}\frac{\#\mathcal{D}^{(2)}_n}{\#\mathcal{D}_n}.
\end{multline*}
In view of~\eqref{eq:difference_assump},
\begin{equation}\label{prop:example1-1_proof}
0\leq \limsup_{n\to\infty}\frac{\#\mathcal{D}^{(2)}_n}{\#\mathcal{D}_n}\leq \limsup_{n\to\infty}\frac{\#\mathcal{D}^{(1)}_n}{\#\mathcal{D}_n}=\limsup_{n\to\infty}\frac{\#\mathcal{D}^{(1)}_n}{\#\mathcal{D}^{(1)}_n-\#\mathcal{D}^{(2)}_n}<\infty,
\end{equation}
and we see that $\mathcal{D}_n$ satisfies~\eqref{eq:van_Hove}.

If $\mathcal{D}^{(1)}_n$ satisfies~\eqref{eq:additional_assumption_new01}, then, for every $a,b\in\N$ and $i,j=1,\ldots,d$, $i\neq j$, it holds that for all $n\in\N$,
$$
\frac{\#(\mathcal{D}_n\cap \mathbb{Z}_i(a)\cap \mathbb{Z}_j(b))}{\#\mathcal{D}_n}\leq \frac{\#(\mathcal{D}^{(1)}_n\cap \mathbb{Z}_i(a)\cap \mathbb{Z}_j(b))}{\#\mathcal{D}_n^{(1)}}\frac{\#\mathcal{D}_n^{(1)}}{\#\mathcal{D}_n}\leq \frac{K}{ab}\sup_{n\in\N}\frac{\#\mathcal{D}_n^{(1)}}{\#\mathcal{D}_n}=: \frac{K'}{ab},
$$
where we used~\eqref{prop:example1-1_proof} for the last passage.
\end{proof}

With minimal changes, the above proof leads to the following.
\begin{assertion}\label{prop:intersection}
Let $\mathcal{D}^{(1)}_n$ and $\mathcal{D}^{(2)}_n$ be two sequences of sets satisfying~\eqref{eq:van_Hove}. Suppose further that
\begin{equation*}
\limsup_{n\to\infty}\frac{\#(\mathcal{D}^{(1)}_n\cup\mathcal{D}^{(2)}_n)}{\#(\mathcal{D}^{(1)}_n\cap \mathcal{D}^{(2)}_n)}<\infty.
\end{equation*}
Then the sequence~$\mathcal{D}_n:=\mathcal{D}_n^{(1)}\cap  \mathcal{D}^{(2)}_n$ satisfies~\eqref{eq:van_Hove}. Moreover, if $\mathcal{D}^{(1)}_n$ or $\mathcal{D}^{(2)}_n$ satisfies~\eqref{eq:additional_assumption_new01}, then $\mathcal{D}_n$ satisfies~\eqref{eq:additional_assumption_new01} as well.
\end{assertion}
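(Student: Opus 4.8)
The plan is to mirror the proof of Proposition~\ref{prop:differences}, replacing the set difference by the intersection and adapting the one algebraic identity that drives the argument. Here the relevant element-wise inclusion is
$$
(A\cap B)\Delta(C\cap D)\subseteq (A\Delta C)\cup(B\Delta D),
$$
which I would verify exactly as in the difference case: a point lying in $A\cap B$ but not in $C\cap D$ must fail to lie in $C$ or in $D$, hence belongs to $A\Delta C$ or to $B\Delta D$, and symmetrically. Applying this with $A=\mathcal{D}_n^{(1)}$, $B=\mathcal{D}_n^{(2)}$, $C=\mathcal{D}_n^{(1)}+c$, $D=\mathcal{D}_n^{(2)}+c$, together with the fact that translation commutes with intersection, that is $\mathcal{D}_n+c=(\mathcal{D}_n^{(1)}+c)\cap(\mathcal{D}_n^{(2)}+c)$, yields for every fixed $c\in\mathbb{Z}^d$ the bound
$$
\#(\mathcal{D}_n\Delta(\mathcal{D}_n+c))\leq \#\big(\mathcal{D}_n^{(1)}\Delta(\mathcal{D}_n^{(1)}+c)\big)+\#\big(\mathcal{D}_n^{(2)}\Delta(\mathcal{D}_n^{(2)}+c)\big).
$$

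Dividing by $\#\mathcal{D}_n$ and inserting the factors $\#\mathcal{D}_n^{(i)}$, I would rewrite the right-hand side, for $i=1,2$, as a sum of products of $\#(\mathcal{D}_n^{(i)}\Delta(\mathcal{D}_n^{(i)}+c))/\#\mathcal{D}_n^{(i)}$ with $\#\mathcal{D}_n^{(i)}/\#\mathcal{D}_n$. The first factor tends to zero by the van Hove property~\eqref{eq:van_Hove} of $\mathcal{D}_n^{(i)}$. The crucial estimate, playing the role of~\eqref{prop:example1-1_proof}, is the boundedness of the second factor: since $\mathcal{D}_n=\mathcal{D}_n^{(1)}\cap\mathcal{D}_n^{(2)}$ and $\#\mathcal{D}_n^{(i)}\leq \#(\mathcal{D}_n^{(1)}\cup\mathcal{D}_n^{(2)})$, one has
$$
\frac{\#\mathcal{D}_n^{(i)}}{\#\mathcal{D}_n}\leq \frac{\#(\mathcal{D}_n^{(1)}\cup\mathcal{D}_n^{(2)})}{\#(\mathcal{D}_n^{(1)}\cap\mathcal{D}_n^{(2)})},
$$
whose $\limsup$ is finite by hypothesis; combined with the finiteness of each individual ratio this produces a finite bound $L:=\sup_{n}\max_{i}\#\mathcal{D}_n^{(i)}/\#\mathcal{D}_n$. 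Letting $n\to\infty$ then gives~\eqref{eq:van_Hove} for $\mathcal{D}_n$.

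For the refinement~\eqref{eq:additional_assumption_new01}, I would assume without loss of generality that $\mathcal{D}_n^{(1)}$ satisfies it with constant $K$. Because $\mathcal{D}_n\subseteq\mathcal{D}_n^{(1)}$, intersecting both sides with $\mathbb{Z}_i(a)\cap\mathbb{Z}_j(b)$ and dividing gives, for all $n$ and all $a,b\in\N$, $i\neq j$,
$$
\frac{\#(\mathcal{D}_n\cap\mathbb{Z}_i(a)\cap\mathbb{Z}_j(b))}{\#\mathcal{D}_n}\leq \frac{\#(\mathcal{D}_n^{(1)}\cap\mathbb{Z}_i(a)\cap\mathbb{Z}_j(b))}{\#\mathcal{D}_n^{(1)}}\cdot\frac{\#\mathcal{D}_n^{(1)}}{\#\mathcal{D}_n}\leq \frac{K}{ab}\cdot L,
$$
so~\eqref{eq:additional_assumption_new01} holds with $K'=KL$ (the same argument works verbatim with $\mathcal{D}_n^{(2)}$ in place of $\mathcal{D}_n^{(1)}$).

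The only genuine difference from Proposition~\ref{prop:differences}, and thus the main point to get right, is the uniform boundedness of $\#\mathcal{D}_n^{(i)}/\#\mathcal{D}_n$. In the difference case this followed from a containment forcing the complement to stay a positive fraction of $\mathcal{D}_n^{(1)}$; here it must instead be extracted from the union-over-intersection ratio hypothesis, as above. Everything else is a transcription of the earlier proof, which is why it suffices to record the statement with the remark that it follows with minimal changes.
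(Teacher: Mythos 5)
Your proposal is correct and is exactly the ``minimal changes'' adaptation of the proof of Proposition~\ref{prop:differences} that the paper has in mind: you replace the inclusion $(A\setminus B)\Delta(C\setminus D)\subseteq(A\Delta C)\cup(B\Delta D)$ by its intersection analogue and derive the needed uniform bound on $\#\mathcal{D}_n^{(i)}/\#\mathcal{D}_n$ from the union-over-intersection hypothesis, which is the one genuinely new point. Both the van Hove part and the transfer of~\eqref{eq:additional_assumption_new01} are handled as the paper does, so there is nothing to add.
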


\appendix

\section{On the regular growth condition for discrete  domains}\label{sec:appendix}

The following definition can be found on p.~173 in \cite{Bulinski+Shashkin}.

\begin{define}\label{def:Bulinski+Shashkin}
A sequence of finite sets $\mathcal{D}_n\subset \mathbb{Z}^d$ is said to  be regularly growing to infinity if as $n\to\infty$,
\begin{equation}
\label{eq:def_Bulinski+Shashkin}
   \#\mathcal{D}_n\to \infty\quad\text{and}\quad \frac{\#(\mathcal{D}^{1}_n\setminus \mathcal{D}_n)}{\#\mathcal{D}_n}\to 0,
\end{equation}
where for $A\subset\mathbb{Z}^d$ and $p\in\mathbb{N}$, we denote by
$$
A^{p}:=\{x=(x_1,\ldots,x_d)\in\mathbb{Z}^d:{\rm dist}(x,A)\leq p\},
$$
and ${\rm dist}$ is the supremum metric on $\mathbb{Z}^d$.
\end{define}

\begin{assertion}\label{prop:def_equivalence}
Assume that $\mathcal{D}_n\subset\mathbb{Z}^d$ is a sequence of finite sets and $\#\mathcal{D}_n\to \infty$ as $n\to\infty$. The following statements are equivalent:
\begin{enumerate}[label={\rm(\roman{*})},ref=(\roman{*})]
\item\label{[(i)]} Condition~\eqref{eq:van_Hove} holds for all $c\in\mathbb{Z}^d$.
\item\label{[(ii)]} Condition~\eqref{eq:van_Hove} holds for $c=\pm e_k$, $k=1,\ldots,d$.
\item\label{[(iii)]} Condition~\eqref{eq:van_Hove} holds for $c=e_k$, $k=1,\ldots,d$.
\item\label{[(iv)]} The sequence $\mathcal{D}_n$ is regularly growing.
\end{enumerate}
\end{assertion}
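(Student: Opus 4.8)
The plan is to work throughout with the symmetric-difference counts $\delta_n(c)=\#(\mathcal{D}_n\Delta(\mathcal{D}_n+c))$ from~\eqref{eq:delta}, recalling that~\eqref{eq:van_Hove} for a given $c$ is equivalent to $\delta_n(c)/\#\mathcal{D}_n\to 0$. First I would record two elementary structural properties of the map $c\mapsto\delta_n(c)$. The \emph{symmetry} $\delta_n(c)=\delta_n(-c)$ holds because cardinality and symmetric difference are translation invariant, so translating $\mathcal{D}_n\Delta(\mathcal{D}_n+c)$ by $-c$ turns it into $(\mathcal{D}_n-c)\Delta\mathcal{D}_n$. The \emph{subadditivity} $\delta_n(c+c')\leq\delta_n(c)+\delta_n(c')$ follows from the inclusion
$$
\mathcal{D}_n\Delta(\mathcal{D}_n+c+c')\subseteq\big(\mathcal{D}_n\Delta(\mathcal{D}_n+c)\big)\cup\big((\mathcal{D}_n+c)\Delta(\mathcal{D}_n+c+c')\big),
$$
together with the observation that the second set on the right is the $c$-translate of $\mathcal{D}_n\Delta(\mathcal{D}_n+c')$ and hence has cardinality $\delta_n(c')$.

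With these two facts in hand, the equivalence of \ref{[(i)]}, \ref{[(ii)]} and \ref{[(iii)]} follows from the cycle \ref{[(i)]} $\Rightarrow$ \ref{[(ii)]} $\Rightarrow$ \ref{[(iii)]} $\Rightarrow$ \ref{[(i)]}. The first two implications are immediate, since \ref{[(ii)]} and \ref{[(iii)]} merely require~\eqref{eq:van_Hove} for fewer values of $c$. For \ref{[(iii)]} $\Rightarrow$ \ref{[(i)]}, the symmetry upgrades \ref{[(iii)]} to $\delta_n(\pm e_k)/\#\mathcal{D}_n\to 0$ for all $k$; then, writing a fixed $c=(c_1,\ldots,c_d)$ as the sum of $|c_k|$ copies of $\operatorname{sign}(c_k)e_k$ over $k=1,\ldots,d$ and applying subadditivity gives
$$
\delta_n(c)\leq\sum_{k=1}^{d}|c_k|\,\delta_n(\operatorname{sign}(c_k)e_k),
$$
a finite sum each of whose terms is $o(\#\mathcal{D}_n)$, whence $\delta_n(c)/\#\mathcal{D}_n\to 0$, which is \ref{[(i)]}.

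It remains to tie in \ref{[(iv)]}. The key bookkeeping observation is that the closed unit ball of the supremum metric on $\mathbb{Z}^d$ is the cube $\{-1,0,1\}^d$, so that $\mathcal{D}_n^{1}=\bigcup_{c\in\{-1,0,1\}^d}(\mathcal{D}_n+c)$ and therefore $\mathcal{D}_n^{1}\setminus\mathcal{D}_n=\bigcup_{c\in\{-1,0,1\}^d}\big((\mathcal{D}_n+c)\setminus\mathcal{D}_n\big)$. For \ref{[(i)]} $\Rightarrow$ \ref{[(iv)]}, I would bound this union by the corresponding sum and use $\#((\mathcal{D}_n+c)\setminus\mathcal{D}_n)\leq\delta_n(c)$ over the finitely many $c\in\{-1,0,1\}^d$ to conclude $\#(\mathcal{D}_n^{1}\setminus\mathcal{D}_n)=o(\#\mathcal{D}_n)$, i.e.~\eqref{eq:def_Bulinski+Shashkin}. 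Conversely, for \ref{[(iv)]} $\Rightarrow$ \ref{[(iii)]}, since $\pm e_k\in\{-1,0,1\}^d$ both $(\mathcal{D}_n+e_k)\setminus\mathcal{D}_n$ and $(\mathcal{D}_n-e_k)\setminus\mathcal{D}_n$ lie inside $\mathcal{D}_n^{1}\setminus\mathcal{D}_n$; combined with the translation identity $\#(\mathcal{D}_n\setminus(\mathcal{D}_n+e_k))=\#((\mathcal{D}_n-e_k)\setminus\mathcal{D}_n)$ this yields $\delta_n(e_k)\leq 2\,\#(\mathcal{D}_n^{1}\setminus\mathcal{D}_n)=o(\#\mathcal{D}_n)$, which is \ref{[(iii)]}. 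Together with \ref{[(iii)]} $\Rightarrow$ \ref{[(i)]} this closes the loop.

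I do not expect any single step to be a genuine obstacle: the whole argument reduces to the two structural lemmas for $\delta_n$ plus the identification of the supremum-metric unit ball with $\{-1,0,1\}^d$. The only point requiring care is the asymmetry between $(\mathcal{D}_n+c)\setminus\mathcal{D}_n$ and $\mathcal{D}_n\setminus(\mathcal{D}_n+c)$ when passing to and from~\eqref{eq:def_Bulinski+Shashkin}, which is handled cleanly by the symmetry $\delta_n(c)=\delta_n(-c)$.
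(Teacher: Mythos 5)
Your proof is correct, and at its core it runs on the same engine as the paper's --- decomposing an arbitrary shift $c$ into unit steps and controlling symmetric differences step by step --- but the bookkeeping is organized differently and in one place more carefully. You isolate symmetry and subadditivity of $c\mapsto\delta_n(c)$ as explicit lemmas and use them to prove (iii) $\Rightarrow$ (i) directly; the paper instead routes this implication through (iv), proving (iv) $\Rightarrow$ (i) by telescoping the inclusion $A\setminus B\subset(A\setminus C)\cup(C\setminus B)$ along a lattice path from $0$ to $c$ and invoking translation invariance of the regular-growth property. The two devices are interchangeable, but your inequality $\delta_n(c+c')\leq\delta_n(c)+\delta_n(c')$ makes the quantitative dependence on $c$, namely $\delta_n(c)\leq\sum_{k}|c_k|\,\delta_n(\operatorname{sign}(c_k)e_k)$, transparent, and it keeps (iv) out of the main equivalence cycle, which is tidier. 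You are also more precise about the $1$-neighborhood: the unit ball of the supremum metric is $\{-1,0,1\}^d$, so $\mathcal{D}_n^{1}=\bigcup_{c\in\{-1,0,1\}^d}(\mathcal{D}_n+c)$ includes the diagonal translates; the paper writes $\mathcal{D}_n^{1}=\bigcup_{k}(\mathcal{D}_n\pm e_k)$, which for $d\geq 2$ omits these and leaves its (ii) $\Rightarrow$ (iv) step technically incomplete as written (it is repaired exactly as you do, by first upgrading to (i) and summing over all $3^d$ shifts). Your closing of the loop via (i) $\Rightarrow$ (iv) $\Rightarrow$ (iii) is clean and complete.
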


\begin{proof}
Condition \ref{[(i)]} trivially implies condition \ref{[(ii)]}, and \ref{[(ii)]} clearly implies \ref{[(iii)]}. The fact that \ref{[(iii)]}$\Longrightarrow$\ref{[(ii)]} follows from
$$
\#((\mathcal{D}_n-e_k)\Delta \mathcal{D}_n)=\#(((\mathcal{D}_n-e_k)\Delta \mathcal{D}_n)+e_k)=\#(\mathcal{D}_n\Delta (\mathcal{D}_n+e_k))=\#((\mathcal{D}_n+e_k)\Delta \mathcal{D}_n).
$$
We now prove that \ref{[(ii)]}$\Longrightarrow$\ref{[(iv)]}. Note that
$$
\mathcal{D}^{1}_n=\bigcup_{k=1}^{d}(\mathcal{D}_n\pm e_k).
$$
Thus,
$$
\frac{\#(\mathcal{D}^{1}_n\setminus \mathcal{D}_n)}{\#\mathcal{D}_n}\leq \sum_{k=1}^{d}\frac{\#((\mathcal{D}_n\pm e_k)\setminus \mathcal{D}_n)}{\#\mathcal{D}_n}\leq \sum_{k=1}^{d}\frac{\#((\mathcal{D}_n\pm e_k)\Delta \mathcal{D}_n)}{\#\mathcal{D}_n}.
$$
The right-hand side converges to $0$, since by~\eqref{eq:van_Hove} every summand converges to $0$.

We proceed to the proof of \ref{[(iv)]}$\Longrightarrow$\ref{[(i)]}. Assume that~\eqref{eq:def_Bulinski+Shashkin} holds and fix $ c\in\mathbb{Z}^d$. Using the inclusion $A\setminus B\subset (A\setminus C)\cup(C\setminus B)$ which holds for any sets $A,B,C$, we conclude that
$$
(\mathcal{D}_n+c)\Delta \mathcal{D}_n\subset \bigcup_j \left((\mathcal{D}_n+u_j)\setminus (\mathcal{D}_n+v_j)\right),
$$
where the union is finite and for every index $j$, $u_j-v_j=\pm e_{k_j}$ for some $k_j\in\{1,\ldots,d\}$. Since $\#\mathcal{D}_n=\#(\mathcal{D}_n+x)$ for every $x\in\mathbb{Z}^d$, it suffices to check that, for every $j$,
$$
\lim_{n\to\infty}\frac{\#\left((\mathcal{D}_n+u_j)\setminus (\mathcal{D}_n+v_j)\right)}{\#(\mathcal{D}_n+v_j)}=0,
$$
but this follows from the inclusion $(\mathcal{D}_n+u_j)=(\mathcal{D}_n+v_j\pm e_{k_j})\subset (\mathcal{D}_n+v_j)^1$ and the fact that if~\eqref{eq:def_Bulinski+Shashkin} holds for a sequence $\mathcal{D}_n$, it also holds for the shifted sequence $\mathcal{D}_n+x$, for every fixed $x\in\mathbb{Z}^d$.
\end{proof}

The following result is a combination of Proposition~\ref{prop:def_equivalence} and Lemma 1.5 in \cite{Bulinski+Shashkin}. In some cases, it is useful for checking~\eqref{eq:def_Bulinski+Shashkin}.
\begin{assertion}\label{prop:continuous-discrete}
Assume that $V_n$, $n\in \N$, is a sequence of bounded measurable subsets of $\mathbb{R}^d$ satisfying the so-called van Hove condition, meaning that for every $\varepsilon>0$
\begin{equation}
\label{eq:van_hove_continuous}
   \lim_{n\to\infty}\frac{{\rm Vol}(\partial V_n\oplus B^d_{\varepsilon}(0))}{{\rm Vol}(V_n)}=0,
\end{equation}
where $\partial V_n$ is the topological boundary of $V_n$. Then the sequence $\mathcal{D}_n:=V_n\cap \mathbb{Z}^d$ satisfies~\eqref{eq:def_Bulinski+Shashkin}.
\end{assertion}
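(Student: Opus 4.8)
The plan is to pass between lattice-point counts and Lebesgue volumes through the tiling of $\R^d$ by the half-open unit cubes $Q_z:=z+[-\tfrac12,\tfrac12)^d$, $z\in\mathbb{Z}^d$, each of which has unit volume and diameter $\sqrt d$. Since these cubes are disjoint and partition $\R^d$, one has $\Vol(V_n)=\sum_{z\in\mathbb{Z}^d}\Vol(V_n\cap Q_z)$, and counting lattice points reduces to counting the associated cubes. Two elementary geometric observations drive everything. First, if ${\rm cl}(Q_z)$ meets $\partial V_n$ at a point $y$, then every point of ${\rm cl}(Q_z)$ lies within $\sqrt d$ of $y$, so $Q_z\subseteq \partial V_n\oplus \overline{B}_{\sqrt d}(0)$ (closed Euclidean ball of radius $\sqrt d$); as the cubes are disjoint of unit volume, the number $N_n^\partial$ of such ``boundary'' lattice points satisfies $N_n^\partial\le \Vol(\partial V_n\oplus B^d_\varepsilon(0))$ for any $\varepsilon$ with $B^d_\varepsilon(0)\supseteq \overline{B}_{\sqrt d}(0)$. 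Second, any segment joining a point of $V_n$ to a point of its complement must meet $\partial V_n$ (using $\partial\,{\rm cl}(V_n)\subseteq\partial V_n$ to dispose of boundary subtleties when the endpoint lies in ${\rm cl}(V_n)\setminus V_n$).

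Next I would establish the lower bound $\#\mathcal{D}_n\ge \Vol(V_n)-N_n^\partial$. Indeed, if $V_n\cap Q_z\neq\varnothing$ then either the center satisfies $z\in V_n$, i.e. $z\in\mathcal{D}_n$, or $z\notin V_n$ while $Q_z$ still meets $V_n$, in which case the segment inside ${\rm cl}(Q_z)$ from $z$ to a point of $V_n$ forces ${\rm cl}(Q_z)\cap\partial V_n\neq\varnothing$, so that $z$ is counted in $N_n^\partial$. Hence $\#\{z:V_n\cap Q_z\neq\varnothing\}\le \#\mathcal{D}_n+N_n^\partial$, while the left-hand side is at least $\Vol(V_n)=\sum_{z:V_n\cap Q_z\neq\varnothing}\Vol(V_n\cap Q_z)$. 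By the van Hove condition \eqref{eq:van_hove_continuous} one has $N_n^\partial=o(\Vol(V_n))$, and therefore $\#\mathcal{D}_n\ge (1-o(1))\Vol(V_n)$.

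I would then control the outer layer $\mathcal{D}_n^1\setminus\mathcal{D}_n$. If $x$ lies in it, there is $z\in\mathcal{D}_n\subseteq V_n$ with $\|x-z\|_\infty\le 1$ and $x\notin V_n$; the segment $[z,x]$ has Euclidean length $\le\sqrt d$ and joins $V_n$ to its complement, hence meets $\partial V_n$ within distance $\sqrt d$ of $x$. Thus $\mathcal{D}_n^1\setminus\mathcal{D}_n\subseteq(\partial V_n\oplus\overline{B}_{\sqrt d}(0))\cap\mathbb{Z}^d$, whose cardinality is again bounded via the cube tiling by $\Vol(\partial V_n\oplus B^d_{\varepsilon'}(0))$ for a suitable $\varepsilon'$, which is $o(\Vol(V_n))$ by \eqref{eq:van_hove_continuous}. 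Combining with the previous paragraph gives $\#(\mathcal{D}_n^1\setminus\mathcal{D}_n)/\#\mathcal{D}_n\le o(\Vol(V_n))/((1-o(1))\Vol(V_n))\to 0$, which is the second half of \eqref{eq:def_Bulinski+Shashkin}.

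It remains to prove $\#\mathcal{D}_n\to\infty$, for which, by the lower bound, it suffices to check $\Vol(V_n)\to\infty$; this is the step I expect to be the main (if somewhat hidden) obstacle, as it is the only place where the full force of ``for every $\varepsilon$'' in van Hove is genuinely needed rather than just one large $\varepsilon$. I would derive it from an isoperimetric-type inequality: since $(V_n\oplus B^d_\varepsilon(0))\setminus V_n\subseteq \partial V_n\oplus B^d_\varepsilon(0)$, the Brunn--Minkowski inequality yields $\Vol(\partial V_n\oplus B^d_\varepsilon(0))\ge \Vol(V_n\oplus B^d_\varepsilon(0))-\Vol(V_n)\ge c_\varepsilon\,\Vol(V_n)^{(d-1)/d}$ for a constant $c_\varepsilon>0$, whence $\Vol(\partial V_n\oplus B^d_\varepsilon(0))/\Vol(V_n)\ge c_\varepsilon\,\Vol(V_n)^{-1/d}$. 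The van Hove condition forces the left side to $0$, so $\Vol(V_n)\to\infty$ and hence $\#\mathcal{D}_n\to\infty$. Alternatively, one may bypass these volume estimates and simply invoke Lemma~1.5 in \cite{Bulinski+Shashkin}, which is precisely the continuous-to-discrete passage asserted here; the argument above is included only to keep the reduction self-contained.
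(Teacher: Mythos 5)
Your argument is correct, and it is in fact more than the paper supplies: the paper offers no proof of this proposition at all, simply delegating it to Lemma~1.5 in \cite{Bulinski+Shashkin} (combined with Proposition~\ref{prop:def_equivalence}). Your unit-cube tiling gives a clean, self-contained substitute. The three estimates all check out: the count of lattice points whose closed cube meets $\partial V_n$ is dominated by ${\rm Vol}(\partial V_n\oplus B^d_\varepsilon(0))$ for any $\varepsilon$ of radius exceeding $\sqrt d$; the segment argument correctly places every point of $\mathcal{D}_n^1\setminus\mathcal{D}_n$ within Euclidean distance $\sqrt d$ of $\partial V_n$ (the connectedness argument goes through because a segment avoiding $\partial V_n$ lies in the disjoint union of the open sets ${\rm Int}(V_n)$ and ${\rm Int}(V_n^c)$); and the lower bound $\#\mathcal{D}_n\geq{\rm Vol}(V_n)-N_n^\partial$ follows as you say. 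You also correctly isolated the one step that a careless reading would miss, namely that $\#\mathcal{D}_n\to\infty$ is part of \eqref{eq:def_Bulinski+Shashkin} and does not follow from the relative boundary estimate alone; the Brunn--Minkowski derivation of ${\rm Vol}(\partial V_n\oplus B^d_\varepsilon(0))\geq c_\varepsilon\,{\rm Vol}(V_n)^{(d-1)/d}$ (legitimate here since $V_n\oplus B^d_\varepsilon(0)$ and $\partial V_n\oplus B^d_\varepsilon(0)$ are open, hence measurable) settles it. One small quibble: since ${\rm Vol}(\partial V_n\oplus B^d_{\varepsilon_1}(0))\leq{\rm Vol}(\partial V_n\oplus B^d_{\varepsilon_2}(0))$ for $\varepsilon_1\leq\varepsilon_2$, the hypothesis for a single sufficiently large $\varepsilon$ already suffices for every step, including the Brunn--Minkowski one, so your remark that the full ``for every $\varepsilon$'' is genuinely needed there is not quite right --- but this affects nothing.
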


Our last auxiliary result provides sufficient conditions for~\eqref{eq:additional_assumption_new01}. It has been used in the proof of Proposition~\ref{prop:example2}.

\begin{assertion}\label{prop:add_assump_suff2}
Assume that there exist two sequences $(s_{1}(n),\ldots,s_{d}(n))_{n\in\N}$ and $(c_{1}(n),\ldots,c_{d}(n))_{n\in\N}$
of nonnegative integers such that the rectangle
$$
\Pi_n:=\left(\varprod_{i=1}^{d}[c_i(n),c_i(n)+s_i(n)]\right)\bigcap \N^d
$$
satisfies
\begin{equation}\label{eq:square_assumption}
\#\mathcal{D}_n\subset \Pi_n\quad\text{and}\quad \overline{C}:=\sup_{n\in\N}\frac{\#\Pi_n}{\#\mathcal{D}_n}<\infty.
\end{equation}
Then~\eqref{eq:additional_assumption_new01} holds. More generally, if~\eqref{eq:additional_assumption_new01} holds with $\mathcal{D}_n$ replaced by {\it some} set $\Pi_n$ which satisfies~\eqref{eq:square_assumption}, then~\eqref{eq:additional_assumption_new01} holds for $\mathcal{D}_n$.
\end{assertion}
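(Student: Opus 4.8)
The plan is to reduce both assertions to a single statement: that the covering rectangle $\Pi_n$ itself satisfies \eqref{eq:additional_assumption_new01}. First I would record the transfer step, which is in fact the whole content of the ``more general'' second assertion. Since $\mathcal{D}_n\subset\Pi_n$, the numerator can only shrink, $\#(\mathcal{D}_n\cap\mathbb{Z}_i(a)\cap\mathbb{Z}_j(b))\leq\#(\Pi_n\cap\mathbb{Z}_i(a)\cap\mathbb{Z}_j(b))$, whereas the second half of \eqref{eq:square_assumption} gives $\#\Pi_n\leq\overline{C}\,\#\mathcal{D}_n$. Combining the two,
$$
\frac{\#(\mathcal{D}_n\cap\mathbb{Z}_i(a)\cap\mathbb{Z}_j(b))}{\#\mathcal{D}_n}\leq\overline{C}\,\frac{\#(\Pi_n\cap\mathbb{Z}_i(a)\cap\mathbb{Z}_j(b))}{\#\Pi_n}\leq\frac{\overline{C}K}{ab},
$$
provided $\Pi_n$ obeys \eqref{eq:additional_assumption_new01} with some constant $K$. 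Thus it remains only to prove that the explicit product rectangle $\Pi_n$ satisfies \eqref{eq:additional_assumption_new01}, which simultaneously settles the first assertion.

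Next I would exploit the product structure $\Pi_n=I_1\times\cdots\times I_d$, where $I_k:=[c_k(n),c_k(n)+s_k(n)]\cap\N$ is a block of consecutive positive integers. For fixed $i\neq j$ and $a,b\in\N$, the conditions $a\mid x_i$ and $b\mid x_j$ constrain disjoint coordinates, so the count factorizes and
$$
\frac{\#(\Pi_n\cap\mathbb{Z}_i(a)\cap\mathbb{Z}_j(b))}{\#\Pi_n}=\frac{\#\{x\in I_i:a\mid x\}}{\#I_i}\cdot\frac{\#\{x\in I_j:b\mid x\}}{\#I_j}.
$$
Hence it suffices to prove the one-dimensional density estimate $\#\{x\in I_k:m\mid x\}\leq (K_0/m)\#I_k$ for every block $I_k$, every $m\in\N$, and some absolute constant $K_0$; taking the product over the two coordinates then yields \eqref{eq:additional_assumption_new01} with $K=K_0^{2}$.

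The remaining interval-counting bound is where the only genuine subtlety lies. The number of multiples of $m$ among the $\#I_k$ consecutive integers forming $I_k$ is at most $\lfloor \#I_k/m\rfloor+1$. When $m\leq\#I_k$ the trailing $+1$ is harmless because $1\leq\#I_k/m$, giving $\#\{x\in I_k:m\mid x\}\leq 2\#I_k/m$. The hard part will be the regime $m>\#I_k$: here $I_k$ contains at most one multiple of $m$, and the crude bound $1\leq K_0\#I_k/m$ breaks down once $m$ greatly exceeds the side length. I would resolve this by noting that $I_k$ can contain a multiple of $m$ only if $m\leq\max I_k=c_k(n)+s_k(n)$, so the estimate survives as soon as the offset is comparable to the side, that is $c_k(n)+s_k(n)\leq K_0\#I_k$. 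Verifying this comparability is the crux, and it is exactly what the applications provide: for the dilated convex bodies of Proposition~\ref{prop:example2} one has $s_k(n)\sim a_n\bigl(M_k(\mathcal{D})-m_k(\mathcal{D})\bigr)\to\infty$ and $c_k(n)\sim a_n m_k(\mathcal{D})$, so the ratio $(c_k(n)+s_k(n))/\#I_k$ stays bounded and $K_0$ can be chosen uniformly in $n$.
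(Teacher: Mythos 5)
Your proposal follows exactly the paper's route: transfer from $\mathcal{D}_n$ to $\Pi_n$ via the two halves of \eqref{eq:square_assumption} (which is verbatim the paper's first display and already gives the ``more generally'' clause), then factorize the count over the two coordinates $i\neq j$ using the product structure of $\Pi_n$, and finally bound the one-dimensional density of multiples of $a$ in a block of consecutive integers. The only place where you diverge is the one-dimensional count, and there you are \emph{more} careful than the paper: the paper simply asserts that the number of multiples of $a$ in $[c_i(n),c_i(n)+s_i(n)]$ is at most $\lfloor (s_i(n)+1)/a\rfloor$, which is false for a general offset $c_i(n)$ (a block of $L$ consecutive integers can contain $\lfloor L/a\rfloor+1$ multiples of $a$, e.g.\ the singleton $\{c\}$ with $a\mid c$ contains one multiple while $\lfloor 1/a\rfloor=0$). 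The ``$+1$'' you refuse to discard is precisely what the paper's proof silently drops.

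Your diagnosis of the regime $m>\#I_k$ is therefore not a mere technicality: without the comparability $c_k(n)+s_k(n)\leq K_0(s_k(n)+1)$ that you introduce, the conclusion genuinely fails. For instance, $\mathcal{D}_n=\Pi_n=[2^n,2^n+n]\times[1,n]$ satisfies \eqref{eq:square_assumption} with $\overline{C}=1$ (and even \eqref{eq:van_Hove}), yet with $a=2^n$, $b=1$ one gets $\#(\mathcal{D}_n\cap\mathbb{Z}_1(a)\cap\mathbb{Z}_2(b))/\#\mathcal{D}_n=1/(n+1)$, which is not $O(2^{-n})$; so no uniform $K$ exists and the proposition as stated is false for such far-from-the-origin rectangles. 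The price of your fix is that the comparability of offset and side length is an extra hypothesis not present in the statement, so formally you have proved a corrected version rather than the stated one; but since the stated one is not provable, that is the right outcome. You are also correct that the hypothesis is harmless in the only place the proposition is used: in Proposition~\ref{prop:example2} one has $c_k(n)\sim a_n m_k(\mathcal{D})$ and $s_k(n)\sim a_n(M_k(\mathcal{D})-m_k(\mathcal{D}))$ with $M_k(\mathcal{D})>m_k(\mathcal{D})$, so $(c_k(n)+s_k(n))/(s_k(n)+1)$ stays bounded and your $K_0$ can be chosen uniformly in $n$.
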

\begin{proof}
Fix $i,j=1,\ldots,d$, $i\neq j$. If~\eqref{eq:square_assumption} holds, then for all $n\in\N$ and all $a,b\in\N$ it holds
\begin{equation*}
\frac{\#(\mathcal{D}_n\cap \mathbb{Z}_{i}(a)\cap \mathbb{Z}_{j}(b))}{\#\mathcal{D}_n}\leq \frac{\#(\Pi_n\cap \mathbb{Z}_{i}(a)\cap\mathbb{Z}_j(b))}{\#\mathcal{D}_n}\leq \overline{C}\frac{\#(\Pi_n\cap \mathbb{Z}_{i}(a)\cap\mathbb{Z}_j(b))}{\#\Pi_n}.
\end{equation*}
Since $i\neq j$, we obtain
$$
\frac{\#(\Pi_n\cap \mathbb{Z}_{i}(a)\cap\mathbb{Z}_j(b))}{\#\Pi_n}\leq\frac{1}{s_i(n)+1}\left\lfloor \frac{s_i(n)+1}{a}\right\rfloor \frac{1}{s_j(n)+1}\left\lfloor \frac{s_j(n)+1}{b}\right\rfloor\leq \frac{1}{ab}
$$
and the desired estimate holds true with $K=\overline{C}$.
\end{proof}

\section*{Acknowledgments}
This project has received funding from the European Research Council (ERC) under the European Union's Horizon 2020 research and innovation programme under the Grant Agreement No.\ 759702 and from Centre Henri Lebesgue, programme ANR-11-LABX-0020-0. ZK was supported by the German Research Foundation under Germany's Excellence Strategy  EXC 2044 -- 390685587, Mathematics M\"unster: Dynamics - Geometry - Structure.
AM was supported by UC Berkeley Economics/Haas in the framework of the U4U program. AM gratefully acknowledges the financial support and hospitality of the University of Angers during his stay in December 2022--March 2023.


\begin{thebibliography}{99}

\bibitem{Billingsley:1968}
P. Billingsley (1968).
\textit{Convergence of probability measures}.
John Wiley \& Sons, Inc.

\bibitem{Bill:74}
P. Billingsley (1974).
The probability theory of additive arithmetic functions.
\textit{Ann. Probab.} {\bf 5}, pp.~749--791.

\bibitem{Bingham+Goldie+Teugels:1989}
N.~H. Bingham, C.~M. Goldie and J.~L. Teugels (1989).
\textit{Regular variation}.
Cambridge University Press.

\bibitem{BosMarRas:2019}
A. Bostan, A. Marynych and K. Raschel (2019).
{On the least common multiple of several random integers}.
\textit{J. Number Theory} {\bf 204}, pp.~113--133.

\bibitem{Bulinski+Shashkin}
A.~Bulinski and A.~Shashkin (2007).
\textit{Limit theorems for associated random fields and related systems}.
Advanced Series on Statistical Science \& Applied Probability, 10. World Scientific Publishing.

\bibitem{Cesaro1} 
E. Ces\`{a}ro (1885).
Sur le plus grand commun diviseur de plusieurs nombres.
\textit{Ann. Mat. Pura Appl.} {\bf 13}, pp.~291--294.

\bibitem{Cesaro2} 
E. Ces\`{a}ro (1885).
\'{E}tude moyenne du plus grand commun diviseur de deux nombres.
\textit{Ann. Mat. Pura Appl.} {\bf 13}, pp.~235--250.

\bibitem{Christopher:1956} 
J. Christopher (1956). 
The asymptotic density of some {$k$}-dimensional sets.
 \textit{Amer. Math. Monthly} {\bf 63}, pp.~399--401.

\bibitem{Cohen:1960} 
E. Cohen (1960). Arithmetical functions of a greatest common divisor. I.
\textit{Proc. Amer. Math. Soc.} {\bf 11}, pp.~164--171.

\bibitem{Dirichlet} 
G.~L. Dirichlet (1849).
\"{U}ber die Bestimmung der mittleren Werthe in der Zahlentheorie.
\textit{Abhandlungen der K\"{o}niglich Preussischen Akademie der Wissenschaften}, pp.~69--83.

\bibitem{EW:1939}
P.~Erd\H{o}s and A.~Wintner (1939).
{Additive arithmetical functions and statistical independence}.
\textit{Amer. J. Math.} {\bf 61} pp.~713--721.

\bibitem{Federer_book}
H.~Federer (1969).
\textit{Geometric measure theory}.
Springer-Verlag, New York.

\bibitem{Fernandez+Fernandez:2021} 
J. L. Fern\'{a}ndez and P. Fern\'{a}ndez (2021). 
Divisibility properties of random samples of integers.
\textit{Rev. R. Acad. Cienc. Exactas F\'{\i}s. Nat. Ser. A Mat. RACSAM} {\bf 115}, Paper No. 26.

\bibitem{IksMarRas:2022}
A. Iksanov, A. Marynych and K. Raschel (2022).
{Asymptotics of arithmetic functions of GCD and LCM of random integers in hyperbolic regions}.
\textit{Results Math.} {\bf 77}, Paper No. 165.

\bibitem{HeyToth:2021}
R. Heyman and L. T\'{o}th (2021).
{On certain sums of arithmetic functions involving the GCD and LCM of two positive integers}.
\textit{Results Math.} {\bf 76}, Paper No. 49.

\bibitem{HeyToth:2022-1}
R. Heyman and L. T\'{o}th (2022).
{Hyperbolic summation for functions of the GCD and LCM of several integers}.
\textit{Ramanujan J.} (to appear). \url{https://doi.org/10.1007/s11139-022-00681-2}

\bibitem{HeyToth:2022-2} R. Heyman and L. T\'{o}th (2022).
{Estimates for $k$-dimensional spherical summations of arithmetic functions of the GCD and LCM}.
\textit{arXiv preprint:2204.10074}.

\bibitem{HilberdinkToth:2016} T. Hilberdink and L. T\'{o}th (2016). On the average value of the least
common multiple of $k$ positive integers. 
\textit{J. Number Theory} {\bf 169} pp.~327--341.

\bibitem{Kubilius:1964} J. Kubilius (1964). \textit{Probabilistic methods in the theory of numbers}. American Mathematical Society, Providence, R.I. Vol. 11.

\bibitem{Toth:2014}
L. T\'{o}th (2014).
{Multiplicative arithmetic functions of several variables: a survey}.
\textit{Mathematics without boundaries}, pp.~483--514, Springer, New York.

\bibitem{Ushiroya:2012} N. Ushiroya (2012).
{Mean-Value Theorems for Multiplicative Arithmetic Functions of Several Variables}.
\textit{Integers} {\bf 12}, pp.~989--1002.

\bibitem{Vaidyanathaswamy:1931}
R. Vaidyanathaswamy (1931).
{The theory of multiplicative arithmetic functions}.
\textit{Trans. Amer. Math. Soc.} {\bf 33}, pp.~579--662.

\end{thebibliography}
\end{document}